\theoremstyle{plain}
\newtheorem{theorem}{Theorem}
\newtheorem{lemma}[theorem]{Lemma}
\newtheorem{remark}[theorem]{Remark}
\newtheorem{condition}{Condition}
\newtheorem{corollary}[theorem]{Corollary}
\newtheorem{question}{Question}
\theoremstyle{definition}
\newcommand{\cstar}[1]{C^*_\lambda(#1)}
\newcommand{\Z}{\mathbb{Z}}
\newcommand{\N}{\mathbb{N}}
\newcommand{\Q}{\mathbb{Q}}
\begin{document}

\title{Ideal structure of the $C^*$-algebra of Thompson group $T$}

\author{Collin Bleak, Kate Juschenko}
%
% \Subjclasso[2000]{20F65, (20E06)}
%
% \keywords{sofic groups, asymptotic freeness, permutation matrices}
\date{\today}

\maketitle
\abstract{In a recent paper Uffe Haagerup and Kristian Knudsen Olesen  show that for Richard Thompson's group $T$,  if 
there exists  a finite set $H$ which can be decomposed as  disjoint union of sets $H_1$ and $H_2$ with $\sum_{g\in H_1}\pi(g)=\sum_{h\in H_2}\pi(h)$ and such that the closed ideal generated by $\sum_{g\in H_1}\lambda(g)-\sum_{h\in H_2}\lambda(h)$ coincides with $C^*_\lambda(T)$, then the Richard Thompson group $F$ is not amenable. In particular, if $\cstar{T}$ is simple then $F$ is not amenable. Here we prove the converse, namely, if $F$ is not amenable then we can find two sets $H_1$ and $H_2$ with the above properties.  The only currently available tool for proving simplicity of group $C^*$-algebra is Power's condition. We show that it fails for $\cstar{T}$ and present an apparent weakening of that condition which could potentially  be used for various new groups $H$ to show the simplicity of $\cstar{H}$.  While we use our weakening in the proof of the first result, we also show that the new condition is still too strong to be used to show the simplicity of $\cstar{T}$.  Along the way, we give a new application of the Ping-Pong Lemma to find  free groups as subgroups in groups of homeomorphisms of the circle generated by elements with rational rotation number. 
\section{Introduction}
There has been a long-standing interest in the question of the amenability of Richard Thompson's group $F$, introduced in Thompson's notes of 1965 (see the survey \cite{CFP} for a general background on the three Thompson groups $F<T<V$), with many failed attempts to prove either the amenability or non-amenability of $F$.  The groups $F<T<V$ arise in many areas of mathematics for reasons which are not entirely understood.  One plausible explanation is that they express in some fundamental way connections through Category Theory with associativity and versions of commutativity (see \cite{BrinCat, GeoGuz, DehGeo} for some discussion of these connections), which of course are fundamental aspects of any theory involving products. Regardless of the cause, it is still the case that these groups arise naturally in many areas of mathematics including dynamics, logic, topology, and more obviously geometric group theory.  One fetching example of such an appearance is in e.g., the relationship between the group $F$ and the theory of associahedra, and in particular, the theory related to the the proof of the Four Colour Theorem \cite{BowlinBrin}.  In any case, it is well known now that the Richard Thompson groups are fundamental.   In this paper, we will be investigating some structures related to $T$ which have implications towards the amenability of $F$, by exploring some related questions from the theory of $C^*$ algebras.

Since the foundational paper  \cite{PowersFreeAlgebraSimplicity} of Powers, there has been a long-standing interest in whether there could exist a group $G$ with unique trace, for which its reduced $C^*$ algebra $\cstar{G}$ is not simple.  The paper  \cite{PowersFreeAlgebraSimplicity}  contains various conditions on a group which imply uniqueness of trace and/or simplicity of the algebra.  (See Section \ref{tests} for a discussion of an often-used condition of Powers.) The question of the equivalence of these two properties is explicitly stated in the papers \cite{dlH}, \cite{dlH2} of Pierre de la Harpe, and his excellent overview of the problem \cite{dlH3}.  For more on the general problem we refer the reader to the paper \cite{Robin} of Robin Tucker-Drob.

  Uffe Haagerup and Kristian Knudsen Olesen in  \cite{HaagerupOlesen} show that the simplicity of $\cstar{T}$ implies the non-amenability of $F$ via a construction given below (see Theorem \ref{HO-IFF} below), while it is well known that $T$ is a group with unique trace.  Thus, in this paper we begin to investigate the ideal structure of $\cstar{T}$.
 
 Haagerup and Olesen's idea showing that that the simplicity of the algebra $\cstar{T}$ implies the non-amenability of $F$  runs as follows.  Consider $T$ ``acting'' on the interval $[0,1]$.  Assume that the stabiliser of $0$, which is the standard copy of the Thompson group $F$ in $T$, is amenable.  Since the action of $T$ on $\mathbb{Z}[\frac{1}{2}]$ is transitive, we have that the representation induced by this action, $\pi:T\rightarrow B(l_2(\mathbb{Z}[\frac{1}{2}]))$, is weakly contained in the left regular representation. From this one sees that there is a unique $*$-homomorphism from $\cstar{T}$ into the $C^*$-algebra generated by $\pi(T)$.  
 
Consider now a finite subset $H$ of $T$ so that  $H=H_1\sqcup H_2$ and with

  \[
  \sum_{g\in H_1}\pi(g)-\sum_{h\in H_2}\pi(h)=0.
  \]

The simplicity of $\cstar{T}$ now implies that the ideal generated by  $\sum_{g\in H_1}\lambda(g)-\sum_{h\in H_2}\lambda(g)$ is proper.  However, this is not possible since $\pi$ is non-trivial, so $F$ must be non-amenable.

Our main result is to show a (partial) converse of the above program (we except the simplicity of $\cstar{T}$).

\begin{theorem}\label{HO-IFF}
Thompson's group F is non-amenable if and only if there exists  a finite set $H$ which can be decomposed as  disjoint union of sets $H_1$ and $H_2$ with $\sum_{g\in H_1}\pi(g)=\sum_{h\in H_2}\pi(h)$ and such that the closed ideal generated by $\sum_{g\in H_1}\lambda(g)-\sum_{h\in H_2}\lambda(h)$ coincides with $C^*_\lambda(T)$.
\end{theorem}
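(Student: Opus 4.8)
The plan is to recast the statement via weak containment and then, from the non-amenability of $F$, to construct a single \emph{relation element} that is full. Recall that $\pi$ is the quasi-regular representation $\lambda_{T/F}$, so that $F$ is amenable if and only if $\pi\prec\lambda$; the ``if'' direction of Theorem~\ref{HO-IFF} is exactly the Haagerup--Olesen observation that when $\pi\prec\lambda$ the assignment $\lambda(g)\mapsto\pi(g)$ extends to a nonzero $*$-homomorphism $\cstar{T}\to C^*_\pi(T)$, whose kernel is a proper ideal containing $\lambda(a)-\lambda(b)$ for \emph{every} admissible pair. For the converse I must produce one pair $H=H_1\sqcup H_2$ for which the relation $\sum_{H_1}\pi(g)=\sum_{H_2}\pi(h)$ holds while $\lambda(a)-\lambda(b)$ generates all of $\cstar{T}$, where $a=\sum_{g\in H_1}\lambda(g)$ and $b=\sum_{h\in H_2}\lambda(h)$.

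A first reduction makes the trace harmless. If $(H_1,H_2)$ is any admissible pair, left translation by $g_0^{-1}$ for a fixed $g_0\in H_1$ replaces it by $(g_0^{-1}H_1,g_0^{-1}H_2)$: the sets stay disjoint, the identity $\sum\pi=\sum\pi$ is preserved (multiply on the left by $\pi(g_0^{-1})$), the generated ideal is unchanged (multiplication by the unitary $\lambda(g_0^{-1})$), and now $e\in g_0^{-1}H_1$ while $e\notin g_0^{-1}H_2$, so the canonical trace satisfies $\tau(\lambda(a)-\lambda(b))=1$. Thus I may assume $\lambda(a)-\lambda(b)=1+x_0$ with $x_0$ a finitely supported, trace-zero element whose support avoids $e$. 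Fullness then reduces to a Powers-type averaging: if I can find $t_1,\dots,t_n\in T$ with $\bigl\|\tfrac1n\sum_i \lambda(t_i)\,x_0\,\lambda(t_i)^*\bigr\|<1$, then $\tfrac1n\sum_i\lambda(t_i)(\lambda(a)-\lambda(b))\lambda(t_i)^* = 1+\tfrac1n\sum_i\lambda(t_i)x_0\lambda(t_i)^*$ lies in the ideal generated by $\lambda(a)-\lambda(b)$ and is invertible in the unital algebra $\cstar{T}$, forcing that ideal to be everything.

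It remains to manufacture, from the non-amenability of $F$, a pair $(H_1,H_2)$ realising the relation in $\pi$ whose trace-zero part $x_0$ is simultaneously \emph{dispersible} in the above sense. For the relation itself the plan is to run the Tarski/Hall machinery: Følner failure for $F$ yields, after iterating a doubling set, an exact paradoxical equidecomposition, which I transport to the $T$-set of dyadic rationals and read off as an operator identity $\sum_{H_1}\pi(g)=\sum_{H_2}\pi(h)$ on $l_2(\Z[\tfrac12])$, using that the finitely many piecewise-dyadic rearrangements involved are implemented by honest elements of $T$. For the dispersal I invoke the weakened form of Powers' condition developed in Section~\ref{tests}: although Powers' condition itself fails for $\cstar{T}$, its weakening is designed to apply to supports of this kind, and I verify it through the Ping-Pong Lemma, building a free subgroup $\langle t_1,\dots,t_n\rangle<T$ out of circle homeomorphisms of rational rotation number whose generators scatter the finite support of $x_0$ so that the averaged conjugates collapse in norm.

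The main obstacle is this last coupling. The combinatorial relation lives on the discrete orbit $\Z[\tfrac12]$, whereas the ping-pong dispersal lives in the continuous dynamics of the circle, and the torsion and rotational elements of $T$ are precisely what make full Powers fail. The delicate point is therefore to arrange the non-amenability construction so that the resulting support lies in the regime where the weakened condition \emph{does} apply, and to produce ping-pong generators inside $T$, respecting the cyclic order, that disperse exactly that support. Reconciling these two demands simultaneously, rather than carrying out either step in isolation, is where the real difficulty lies.
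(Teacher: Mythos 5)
Your reduction of fullness to a Powers-type averaging of $1+x_0$ is the right general frame, and the ``if'' direction is correctly attributed to Haagerup--Olesen, but the two steps you leave open are not merely ``delicate'': the first is unnecessary and dubious, the second is provably impossible as stated, and the mechanism that actually closes the argument is absent. On the relation: an identity $\sum_{g\in H_1}\pi(g)=\sum_{h\in H_2}\pi(h)$ on $l_2(\Z[\frac{1}{2}])$ forces, for every pair of dyadic points $x,y$, equality of the counts $\#\{g\in H_1: gx=y\}$ and $\#\{h\in H_2: hx=y\}$; a Tarski/F\o lner-style paradoxical decomposition produces identities involving characteristic functions of pieces (projections in $\ell_\infty$), not an exact matching of full group translations, so it does not hand you such a pair. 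The paper obtains the relation with no amenability input at all, from the disjoint-support identity $(\pi(g_i)-1)(\pi(x_i)-1)=0$: take $x_1,x_2$ generating the copy of $F$ supported in $[\frac{1}{2},1]$ and $g_i=x_i^r$ the rotated copies supported in $[0,\frac{1}{2}]$, and set $H_1=\{g_1,g_2,x_1,x_2\}$ with $H_2$ the complementary part of $H=\{e,g_1,g_2,x_1,x_2,g_1x_1,g_2x_2\}$.

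The more serious gap is in the dispersal step. Any admissible $H$ of this kind contains two elements with disjoint supports (here $g_1$ and $x_1$), and Remark \ref{disjointSupportsAndConjugation} with Corollary \ref{negative} show that no conjugators $t_1,\dots,t_n$ can make $\{h^{t_1},\dots,h^{t_n}\}$ a free set simultaneously for all $h$ in the support of $x_0$ --- and that, not freeness of $\langle t_1,\dots,t_n\rangle$, is the Kesten-type input your averaging would require. You flag this tension honestly but offer no resolution, and the resolution is the actual content of the proof: one does \emph{not} disperse all of $x_0$. The support splits into $\{g_1,g_2\}$ and $E=\{x_1,x_2,g_1x_1,g_2x_2\}$; the first part is absorbed by the norm bound $\|1+\lambda(g_1)+\lambda(g_2)\|\le C<3$, which is precisely where the non-amenability of $F\cong\langle g_1,g_2\rangle$ enters (via Kesten's criterion), while only $E$ --- all of whose elements move the common point $p=\frac{7}{8}$ --- is dispersed, and even then only after right-multiplying by an auxiliary element $g$ and conjugating by elements of $C_T(g)$, as in Lemma \ref{fixed-point}. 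Your proposal spends non-amenability on manufacturing the relation and then asks a free-group dispersal to do the norm estimate alone; that inversion is exactly what cannot be made to work in $T$.
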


The rest of this article proceeds as follows. 

In Section \ref{ideals} we present the proof of the main result, modulo our Lemma \ref{fixed-point}.  

In the Section \ref{tests} we discuss a theorem of Powers and another of Kesten which together produce Condition \ref{kesten-test}, a main test which is often used to detect the simplicity of $\cstar{G}$ for a group $G$.  We show that Condition \ref{kesten-test} fails to apply to Thompson's group $T,$  and we offer a new test (Condition \ref{weak-kesten}) which appears to be a natural weakening of Condition \ref{kesten-test}, and which may be of use for various groups $G$ to show that $\cstar{G}$ is simple. Like Condition \ref{kesten-test}, the new Condition \ref{weak-kesten} gives the simplicity of $\cstar{G}$ for a group $G$ if for all finite subsets $H$ of $G\,\backslash\!\left\{e\right\}$, we can carry out a certain construction (creating large free subgroups in a certain way from $H$). We also show here that Condition \ref{weak-kesten} is still not weak enough to show that the algebra $\cstar{T}$ is simple, while giving some evidence that Condition \ref{weak-kesten} is properly weaker than the commonly used Condition \ref{kesten-test}. 

The authors are unaware of any group with unique trace, for which Condition \ref{kesten-test} fails.  We believe $T$ is a first example of such a group.

  In Section \ref{p-pong} we provide a short discussion of the historical Ping-Pong Lemma and we prove a version of the Ping-Pong Lemma (Lemma \ref{free-powering-one}) useful for detecting free subgroups when considering a group generated by two homeomorphisms of $S^1$ with rational rotation numbers.  Lemma \ref{free-powering-one} is an essential ingredient in the  proof of Lemma \ref{fixed-point}.  
  
  In Section \ref{trying-conditions}, we give proofs that we can carry out the construction of Condition \ref{weak-kesten} for many cases of finite $H\subset T\,\backslash\!\left\{e\right\}$, including the cases we need to prove Lemma \ref{fixed-point}.  We also describe some cases of $H$ where we cannot carry out the construction of Condition \ref{weak-kesten}, but where related constructions do produce large free subgroups.  
  
  In our final Section \ref{conjectures} we state some remaining questions which we find interesting.
  
{\it Acknowledgments:} The authors are grateful to Uffe Haagerup and Kristian Knudsen Olesen for sharing the early drafts of their work \cite{HaagerupOlesen} with us, and to Martin Kassabov and Justin Tatch Moore for feedback on this article while it was under construction.
\section{Non-amenability of $F$ and a condition on ideals of its $C^*$-algebra.}\label{ideals}

\begin{theorem}
The Thompson group F is not amenable if and only if there exists  a finite set $H$ which can be decomposed as  disjoint union of sets $H_1$ and $H_2$ with $\sum_{g\in H_1}\pi(g)=\sum_{h\in H_2}\pi(h)$
and such that the closed ideal generated by $\sum_{g\in H_1}\lambda(g)-\sum_{h\in H_2}\lambda(h)$ coincides with $C^*_\lambda(T)$.
\end{theorem}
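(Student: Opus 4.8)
The statement is an equivalence, and one implication is already available: the argument of Haagerup and Olesen sketched in the introduction shows that if such a set $H=H_1\sqcup H_2$ exists with $\sum_{g\in H_1}\pi(g)=\sum_{h\in H_2}\pi(h)$ and with the closed ideal generated by $b:=\sum_{g\in H_1}\lambda(g)-\sum_{h\in H_2}\lambda(h)$ equal to all of $\cstar{T}$, then $F$ is non-amenable. Indeed, were $F$ amenable then $\pi=\mathrm{Ind}_F^T 1_F$ would be weakly contained in $\lambda=\mathrm{Ind}_F^T\lambda_F$, yielding a surjection $\cstar{T}\to C^*_\pi(T)$ whose kernel is a proper closed ideal containing $b$; this contradicts the assumption that $b$ generates everything. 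So the plan is to establish the reverse implication: assuming $F$ is non-amenable, I must exhibit the pair $H_1,H_2$.

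First I would record the representation-theoretic reformulation of the hypothesis. By continuity of induction, $F$ is amenable iff $1_F\prec\lambda_F$ iff $\pi\prec\lambda$, so non-amenability of $F$ is equivalent to $\pi\not\prec\lambda$. This is the only place the hypothesis enters as an abstract property, and the task becomes to convert this failure of weak containment into a concrete element $b=\sum_{H_1}\lambda(g)-\sum_{H_2}\lambda(h)$ lying in $\ker\bigl(\cstar{T}\to C^*_\pi(T)\bigr)$ --- equivalently with $\pi(b)=0$, i.e.\ $\sum_{H_1}\pi=\sum_{H_2}\pi$ --- which nonetheless generates the whole algebra as an ideal. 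The condition $\sum_{H_1}\pi(g)=\sum_{H_2}\pi(h)$ is purely combinatorial: viewing $\pi$ as the permutation representation of $T$ on the dyadic rationals $X\cong T/F$, it says that for every $x\in X$ the multisets $\{gx:g\in H_1\}$ and $\{hx:h\in H_2\}$ coincide. I would construct such $H_1,H_2$ explicitly inside $T$ from suitable elements with prescribed dynamics on $S^1$; this explicit construction, together with the verification that the resulting support is amenable to the free-subgroup machinery below, is exactly what Lemma \ref{fixed-point} is designed to provide, so here I would simply invoke it.

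Granting $\pi(b)=0$ with $b\neq0$, the remaining step is to show the closed ideal $\langle b\rangle$ equals $\cstar{T}$. For this I would pass to the positive element $x=b^*b$, whose value under the canonical trace is $\tau(x)=|H_1|+|H_2|>0$ (the coefficient of $e$, since $H_1\cap H_2=\emptyset$), and whose support in $\mathbb{C}[T]$ is the finite set of products $g^{-1}g'$ and $h^{-1}h'$. The strategy is Power's averaging idea in the weakened form of Condition \ref{weak-kesten}: using the Ping-Pong Lemma \ref{free-powering-one} I would produce, from the finitely many non-trivial group elements in the support of $x$, a free subgroup whose generators $t_1,\dots,t_n$ spread the off-diagonal terms of $x$ so that $\bigl\|\tfrac1n\sum_{i}\lambda(t_i)\,x\,\lambda(t_i)^*-(|H_1|+|H_2|)\,1\bigr\|$ is strictly less than $|H_1|+|H_2|$. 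The averaged element then lies in $\langle b\rangle$ and is invertible, forcing $\langle b\rangle=\cstar{T}$. This is precisely the construction that Lemma \ref{fixed-point} guarantees can be carried out for the support at hand, via the rational-rotation-number ping-pong of Section \ref{p-pong}.

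The main obstacle is the simultaneity required in the forward direction: I need a single pair $H_1,H_2$ that is at once an exact relation for the permutation representation $\pi$ and whose support admits the ping-pong construction. Either constraint alone is manageable --- relations in $\pi$ come from the combinatorics of the dyadic action, and ping-pong free subgroups are plentiful in $T$ --- but coupling them forces control of the dynamics on $S^1$ of the specific elements $g^{-1}g'$ and $h^{-1}h'$ that appear. Guaranteeing that these have rational rotation numbers with fixed-point behaviour suitable for Lemma \ref{free-powering-one}, while preserving the multiset identity that yields $\pi(b)=0$, is the crux, and is the reason the proof is organised so that this entire difficulty is isolated in Lemma \ref{fixed-point}.
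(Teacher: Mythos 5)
Your reduction of the forward direction to Haagerup--Olesen is fine, but the converse direction has a genuine gap, located exactly where the non-amenability of $F$ ought to enter. In your argument that hypothesis is never actually used once you have a nonzero $b$ with $\pi(b)=0$: you propose to average $x=b^*b$ by conjugators $t_1,\dots,t_n$ so that $\frac1n\sum_i\lambda(t_i)x\lambda(t_i)^*$ is uniformly close to $(|H_1|+|H_2|)\cdot 1$. If that worked it would show the ideal is everything for \emph{any} such $b$, and combined with Haagerup--Olesen it would prove unconditionally that $F$ is non-amenable. The step fails concretely: the group elements in the support of $b^*b$ (the products $g^{-1}g'$, $h^{-1}h'$, $g^{-1}h$) inevitably include pairs with disjoint supports in $S^1$ for the natural choices of $H_1,H_2$ realizing the relation $\sum_{H_1}\pi=\sum_{H_2}\pi$, and Section \ref{tests} of the paper (Remark \ref{disjointSupportsAndConjugation}, Corollary \ref{negative}, and the lemma following Condition \ref{weak-kesten}) shows precisely that for such sets the Powers/Kesten averaging cannot be carried out simultaneously: by Brin--Squier, forcing $\frac1n\|\sum_i\lambda(t_iht_i^{-1})\|$ to be small for one element forces the conjugates of a disjointly supported element to commute. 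You also misattribute Lemma \ref{fixed-point} twice: it does not construct $H_1,H_2$ (that is done by hand from two commuting copies of $F$, with generators $x_1,x_2$ supported in $[\frac12,1]$ and their rotates $g_1,g_2$ supported in $[0,\frac12]$, the relation $\pi(b)=0$ coming from $(1-\pi(g_i))(1-\pi(x_i))=0$ since the supports are disjoint), and it only applies to a finite set of elements all moving a common point $p$, producing free sets of conjugates of $gh$ for an auxiliary $g$, not of $h$ itself.

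The paper's actual argument avoids averaging everything. After multiplying the generator of the ideal by $\lambda(g)$ and rearranging, the element to be controlled is $\lambda(g)-\frac13\lambda((e+g_1+g_2)g)+\frac13\sum_{s\in E}\lambda(sg)$ with $E=\{x_1,x_2,g_1x_1,g_2x_2\}$. The term $\frac13(1+\lambda(g_1)+\lambda(g_2))$ is \emph{not} averaged away; its norm is bounded by $C/3<1$ via Kesten's criterion applied to the non-amenable subgroup $\langle g_1,g_2\rangle\cong F$ --- this is the one and only place non-amenability is used. Only the terms indexed by $E$, all of whose elements move the point $7/8$, are averaged via Lemma \ref{fixed-point}. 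The resulting element of the ideal lies within distance $<1$ of the unitary $\lambda(g)$, hence is invertible. Your proposal is missing this splitting, which is the heart of the proof.
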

\begin{proof}

One part of the theorem follows from the draft \cite{HaagerupOlesen} of Haagerup and Olesen. \\

It is left to show that if $F$ is not amenable then there is a finite set $H$ which satisfies the conditions of the theorem.  Let $x_1$ and $x_2$ be the standard generators of the copy of Thompson's group $F$ in $T$, supported in $[\frac{1}{2},1]\subset S^1$, with supports in $[\frac{1}{2},1]$ and $[\frac{3}{4},1]$ respectively. Let $g_1=x_1^r$ and $g_2=x_2^r$ be the conjugated copies of these generators, where $r$ is rotation by $1/2$, so that $g_1$ and $g_2$ act on the interval $[0,\frac{1}{2}]\subset S^1$ and generate a copy of $F$ there, with trivial action on $[\frac{1}{2},1]=[\frac{1}{2},0]$. Define $H=\{e, g_1, g_2, x_1, x_2, g_1x_1,g_2x_2 \}$. Define $H_1=\{g_1, g_2, x_1,x_2\}$ and $H_2$ to be the rest of the set $H$. Obviously, $\sum_{g\in H_1}\pi(g)-\sum_{h\in H_2}\pi(h)=0$. 

Let us show  that the ideal, $J$, generated by $\sum_{g\in H_1}\lambda(g)-\sum_{h\in H_2}\lambda(h)$ is the whole reduced $C^*$-algebra of $T$. Note that $\|1+\lambda(g_1)+\lambda(g_2)\|\leq C<3$ by assumption. Moreover, the point $p=\frac{7}{8}$ is not  fixed  by any element from  the set $E=\{ x_1, x_2, g_1x_1,g_2x_2  \}$. Thus we can apply Lemma \ref{fixed-point} for the set $E$: let $\varepsilon>0$ and let $g$ and $c_1,\ldots, c_n\in C_T(g)$ be such that 
$$\| \sum_{s\in E} \sum_{i=1}^n\lambda((sg)^{c_i}) \|\leq \varepsilon n.$$

Note that the element $b=\lambda(g-\frac{1}{3}[e+g_1+g_2]g+\frac{1}{3} \sum_{s\in E} sg)$ is in $J$, thus $\frac{1}{n}\sum_{i=1}^n\lambda(c_i)b\lambda({c_i}^{-1}) \in J$. The distance between the element $\frac{1}{n}\sum_{i=1}^n\lambda(c_i)b\lambda({c_i}^{-1})$ and $\lambda(g)$ is strictly smaller then $1$ for large $n$. Indeed,

\begin{align*}
\|\lambda(g)-b\|\leq& \frac{1}{3}\|1+\lambda(g_1)+\lambda(g_2)\| + \frac{1}{3n}\|\sum_{s\in E} \sum_{i=1}^n\lambda((sg)^{c_i})  \|\leq C+\varepsilon.
\end{align*}
thus we have found an invertible element in $J$, therefore $J=C^*_\lambda(T)$.
\end{proof}

\section{Powers' test}\label{tests}
In \cite{PowersFreeAlgebraSimplicity} Powers' gives the following test for the simplicity of the algebra $\cstar{G}$ over a group $G$.

\begin{theorem}\label{sumTest}
  If for all non-empty $H\subset G$ with $|H|<\infty$, $e\not\in H$ and for all positive integers $n$ there is a set $\{c_1,c_2,\ldots,c_n\}\subset G$ so that 
\[
\lim_{n\to\infty}\frac{1}{n}||\Sigma_{i=1}^n \lambda(c_ih{c_i}^{-1})||=0,\,\forall h\in H,
\]
then $\cstar{G}$ is simple.
\end{theorem}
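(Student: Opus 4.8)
The plan is to show that every nonzero closed two-sided ideal $J \subseteq \cstar{G}$ contains an invertible element, which forces $J = \cstar{G}$ since $\cstar{G}$ is unital (with unit $1 = \lambda(e)$). The bridge between the hypothesis and this goal is the canonical trace $\tau$ on $\cstar{G}$, determined by $\tau(\lambda(g)) = 0$ for $g \neq e$ and $\tau(\lambda(e)) = 1$, which is a faithful state. First I would pick any nonzero $x \in J$ and replace it by $a = x^*x \in J$, a nonzero positive element; faithfulness of $\tau$ gives $\tau(a) > 0$, so after rescaling I may assume $\tau(a) = 1$.

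The heart of the argument is a Powers-type averaging. Given $\delta > 0$, I approximate $a$ in norm to within $\delta$ by a finitely supported element $\sum_{g \in F} \alpha_g \lambda(g)$ with $F \subset G$ finite. Because $\tau$ is contractive and $\tau(a) = 1$, the identity coefficient automatically satisfies $|\alpha_e - 1| \le \delta$. Set $H = F \setminus \{e\}$, a finite set with $e \notin H$, and $y = \sum_{h \in H} \alpha_h \lambda(h)$, so $\|a - \alpha_e 1 - y\| \le \delta$. Now I apply the hypothesis to this \emph{fixed} $H$: for any $\varepsilon > 0$ it supplies a finite set $\{c_1, \ldots, c_n\} \subset G$ with $\frac{1}{n}\|\sum_{i=1}^n \lambda(c_i h c_i^{-1})\| < \varepsilon$ for every $h \in H$ simultaneously. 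Define $T(z) = \frac{1}{n}\sum_{i=1}^n \lambda(c_i)\, z\, \lambda(c_i^{-1})$. As a convex combination of inner automorphisms, $T$ is contractive, fixes $1$, and maps $J$ into $J$, while $T(\lambda(h)) = \frac{1}{n}\sum_{i=1}^n \lambda(c_i h c_i^{-1})$.

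Applying $T$ to $a$ and using that $T$ fixes $1$ and is contractive, I estimate
\begin{align*}
\|T(a) - \alpha_e 1\| &= \|T(a - \alpha_e 1)\| \le \|T(a - \alpha_e 1 - y)\| + \|T(y)\| \\
&\le \delta + \sum_{h \in H}|\alpha_h|\,\Big\|\tfrac{1}{n}\sum_{i=1}^n \lambda(c_i h c_i^{-1})\Big\| \le \delta + \varepsilon \sum_{h \in H}|\alpha_h|.
\end{align*}
The key is the order of choices: the set $H$ and the mass $M = \sum_{h\in H}|\alpha_h|$ are pinned down by the approximation \emph{before} $\varepsilon$ and the averaging set are selected. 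Thus I first fix $\delta < \tfrac14$, which determines $M$, and then choose $\varepsilon < 1/(4M)$, yielding $\|T(a) - \alpha_e 1\| < \tfrac12$. Combined with $|\alpha_e - 1| \le \delta < \tfrac14$, this gives $\|T(a) - 1\| < 1$, so $T(a)$ is invertible by the Neumann series. Since $T(a) \in J$, the ideal contains an invertible element and equals $\cstar{G}$.

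The main obstacle is precisely this interplay of quantifiers: the averaging set $\{c_1,\ldots,c_n\}$ must make the normalized sum small \emph{uniformly over all} $h \in H$, which is exactly the content of the hypothesis, and it must be chosen only after the approximation has fixed both $H$ and the coefficient mass $M$. The remaining ingredients — faithfulness of $\tau$, norm approximation of a general element of $\cstar{G}$ by finitely supported ones, and contractivity of the averaging operator $T$ — are routine.
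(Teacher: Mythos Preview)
The paper does not give its own proof of this statement; it attributes the result to Powers \cite{PowersFreeAlgebraSimplicity} and states it without argument. Your proof is correct and is precisely the classical Powers averaging argument: pass to a positive element with trace $1$ using faithfulness of the canonical trace, approximate by a finitely supported element, and then average by inner automorphisms to kill the off-identity terms while fixing the coefficient of $\lambda(e)$, thereby landing within norm $1$ of the identity. Your handling of the quantifier order (fix $H$ and the coefficient mass $M$ first, then choose $\varepsilon$ and the averaging set) is exactly the point that makes the argument work, and you have it right.

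The one proof in the paper that is structurally parallel is the unnamed theorem just before Condition~\ref{weak-kesten}, where the same ``average, then invoke invertibility via distance less than $1$ to a unitary'' mechanism appears. That argument is a variant adapted to a specific ideal generated by a prescribed element rather than an arbitrary nonzero ideal, but the underlying idea is identical to yours.
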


Let $G$ be a group generated by a finite set $S$ with $S=S^{-1}$, then $\frac{1}{|S|}||\Sigma_{h\in S} \lambda(h)||$ is equal to the spectral radius of the simple random walk on the Cayley graph of $G$ with respect to $S$, denoted by $\rho(G,S)$. The spectral radius of the simple random walk have been computed for many groups. Kesten, \cite{kesten}, showed that if $S=\{g_1,\ldots,g_n\}$ is {\it a free set}, i.e., $g_1,\ldots,g_n$ are standard generators of the free group of rank $n$, then the spectral radius is
$$\rho(G,S)=\frac{\sqrt{2n-1}}{n}$$

Thus the following condition implies the hypothesis of the Theorem \ref{sumTest}.
\begin{condition}\label{kesten-test}
For all finite subsets $H\subset G$ with $e\not\in H$ and for all positive integers $n$ there is a set $\{c_1,c_2,\ldots,c_n\}\subset G$ so that 
\[
\langle h^{c_1},h^{c_2}, \ldots, h^{c_n}\rangle
\]
is a free subgroup of $G$ of rank $n$ for all $h\in H$.
\end{condition}

If $g$ is a bijection from a set $X$ to itself, denote by $Supp(g):=\left\{x\in X\mid g\cdot x\neq x\right\}$ and $Fix(g):=X\backslash Supp(g)$, the support and the set of points fixed by  $g$, respectively.

The following remark holds true for groups of permutations of a set $X$.
\begin{remark}\label{disjointSupportsAndConjugation}
Let $X$ be a set, and $G$ the group of bijections from $X$ to $X$.  
Suppose $h_1$, $h_2\in G\backslash\left\{1\right\}$ so that $Supp(h_1)\cap Supp(h_2)=\emptyset$.  If $c_1$, $c_2\in G$ so that $Supp(h_1^{c_1})\cup Supp( h_1^{c_2})=X$ then $Supp(h_2^{c_1})\cap Supp(h_2^{c_2})=\emptyset$.
\end{remark}

\begin{proof}
Suppose 
\[
X=Supp(h_1^{c_1})\cup Supp(h_1^{c_2}) (= c_1\cdot Supp(h_1)\cup c_2\cdot Supp(h_1)).
\]
If there is $x\in X$ so that  $x\in Supp(h_2^{c_1})\cap Supp(h_2^{c_2})$, then $x=c_1\cdot y$ and $x=c_2\cdot z$, where $y$ and $z$ are in $Fix(h_1)$.  In particular, $x\in c_1\cdot Fix(h_1)\cap c_2\cdot Fix(h_1)$.  This implies that $c_1\cdot Supp(h_1)\cup c_2\cdot Supp(h_1)\neq X$.
\end{proof}

Remark \ref{disjointSupportsAndConjugation} immediately implies that we cannot  use Condition \ref{kesten-test} when approaching the question of the simplicity of the algebra $\cstar{T}$.

\begin{corollary}\label{negative}
Suppose that $H\subset T$ admits elements $h_1$ and $h_2$ so that $Supp(h_1)\cap Supp(h_2)=\emptyset $.  Then for $n\geq 2$ there is no set of elements $\left\{c_1,c_2\ldots,c_n\right\}$ so that $\langle h^{c_1},h^{c_2}, \ldots,h^{c_n}\rangle$ is a free group on $n$ generators for all $h\in H$.
\end{corollary}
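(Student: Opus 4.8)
The plan is to argue by contradiction, combining Remark~\ref{disjointSupportsAndConjugation} with the Brin--Squier theorem that no group of piecewise-linear homeomorphisms of an interval contains a non-abelian free subgroup. Suppose, for some $n\geq 2$, that a set $\{c_1,\ldots,c_n\}\subset T$ exists with $\langle h^{c_1},\ldots,h^{c_n}\rangle$ free of rank $n$ for every $h\in H$; I will derive a contradiction already from the two distinguished elements $h_1,h_2$ and the first two conjugators $c_1,c_2$. Since $h_1^{c_1},\ldots,h_1^{c_n}$ and $h_2^{c_1},\ldots,h_2^{c_n}$ are free bases, the subgroups $\langle h_1^{c_1},h_1^{c_2}\rangle$ and $\langle h_2^{c_1},h_2^{c_2}\rangle$ are each free of rank $2$, being generated by two distinct elements of a free basis.

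The crucial step, and the one I expect to require the most care, is to show that a rank-$2$ free subgroup of $T$ has no global fixed point on $S^1$; equivalently, that $Supp(h_1^{c_1})\cup Supp(h_1^{c_2})=S^1$. Here I take $X=S^1$, on which $T$ acts faithfully by (piecewise-linear) homeomorphisms, so that Remark~\ref{disjointSupportsAndConjugation} applies. If instead $\langle h_1^{c_1},h_1^{c_2}\rangle$ fixed some point $p\in S^1$, then cutting the circle at $p$ would realize this group as a subgroup of the group $PL_+(I)$ of orientation-preserving piecewise-linear self-homeomorphisms of the interval, since every element of $T$ fixing $p$ restricts to such a map and this restriction is an injective homomorphism on the stabiliser of $p$. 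By the Brin--Squier theorem $PL_+(I)$ contains no non-abelian free subgroup, contradicting freeness of rank $2$. Hence no such $p$ exists, and the two conjugate supports cover $S^1$.

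With $Supp(h_1^{c_1})\cup Supp(h_1^{c_2})=S^1$ in hand, Remark~\ref{disjointSupportsAndConjugation} (applied to $h_1$ and $h_2$, whose supports are disjoint by hypothesis) yields $Supp(h_2^{c_1})\cap Supp(h_2^{c_2})=\emptyset$. Two homeomorphisms with disjoint supports commute, so $h_2^{c_1}$ and $h_2^{c_2}$ commute. But they are distinct members of a free basis of $\langle h_2^{c_1},\ldots,h_2^{c_n}\rangle$, and distinct basis elements of a free group never commute; this is the desired contradiction. Since $n\geq 2$ guarantees that two conjugators are available, the construction of Condition~\ref{kesten-test} can never be carried out when $H$ contains two elements of disjoint support, which proves the corollary.
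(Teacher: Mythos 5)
Your proof is correct and follows essentially the same route as the paper's: deduce from Brin--Squier (via the absence of a global fixed point for a rank-$2$ free group acting on $S^1$) that $Supp(h_1^{c_1})\cup Supp(h_1^{c_2})=S^1$, then invoke Remark~\ref{disjointSupportsAndConjugation} to force $h_2^{c_1}$ and $h_2^{c_2}$ to have disjoint supports and hence commute, contradicting freeness. You merely spell out in more detail the step (cutting the circle at a common fixed point to land in $PL_+(I)$) that the paper leaves implicit.
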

\begin{proof}
Suppose $H:=\left\{h_1,h_2,\ldots,h_k\right\}$ is a finite set with cardinality at least two, and $h_1$ and $h_2$ are in $H$ so that $Supp(h_1)\cap Supp(h_2)=\emptyset.$  Further suppose that $n\geq 2$ is fixed and $c_1$, $c_2$,$\ldots$, $c_n$ are chosen so that for all $h\in H$, we have $\langle h^{c_1},h^{c_2},\ldots, h^{c_n}\rangle$ is free on $n$ generators.  As proven and Brin and Squier's paper \cite{BSPLR}, the group of piecewise linear homeomorphisms of the unit interval has no non-abelian free subgroups, so we see immediately that $Supp(h_1^{c_1})\cup Supp(h_1^{c_2})=S^1$.  Now by Remark \ref{disjointSupportsAndConjugation} we know that  $Supp(h_2^{c_1})\cap Supp(h_2^{c_2})=\emptyset$  Therefore $\langle h_2^{c_1},h_2^{c_2}\rangle\cong \Z\times\Z$.
\end{proof}

We now offer an apparently weaker version of Condition \ref{kesten-test} which will be used throughout the remainder of this article.  First, we need a supporting theorem.

Below, let $C_G(g)$ be the centralizer of an element $g$ in $G$.

\begin{theorem}
Let $H\subset G$ be a finite set and there is an element $w\in H$ such that for all positive integers $n$ there is a set $\{c_1,c_2,\ldots,c_n\}\subset G$ and $r,s\in G$ such that $c_i \in C_G(swr)$ for all $i$ and 
\[
\lim_{n\to\infty}\frac{1}{n}||\Sigma_{i=1}^n \lambda(c_isgr{c_i}^{-1})||=0,\,\text{ for all } g\in H\backslash \{w\},
\]
then for all coefficients $\beta_g$ indexed by $H$ with $\beta_w\neq 0$, the ideal generated by $\sum_{g\in H}\beta_g \lambda(g)$ is equal to  $\cstar{G}$.

\end{theorem}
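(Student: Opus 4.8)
The plan is to mimic the argument used in the proof of Theorem~\ref{HO-IFF}: starting from the generator $a = \sum_{g\in H}\beta_g\lambda(g)$ of the closed two-sided ideal $J$ it generates, I will manufacture an element of $J$ that is norm-close to a nonzero scalar multiple of a unitary, hence invertible. This forces $J=\cstar{G}$, since any closed two-sided ideal of a unital algebra that contains an invertible element is the whole algebra. The single element of $H$ that I want to survive is $w$, and the entire point of the two hypotheses is to let me cancel every other contribution by a conjugation–averaging argument while leaving the $w$-term untouched.

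Concretely, fix $n$ together with the associated elements $c_1,\dots,c_n$ and $r,s\in G$ provided by the hypothesis. First I would left- and right-multiply the generator by the unitaries $\lambda(s)$ and $\lambda(r)$ to obtain
\[
b \;=\; \lambda(s)\,a\,\lambda(r) \;=\; \sum_{g\in H}\beta_g\,\lambda(sgr)\ \in\ J,
\]
which lies in $J$ because $J$ is a two-sided ideal. The role of $s$ and $r$ is precisely to move the distinguished generator $w$ to the element $swr$ that the $c_i$ centralize. Next I average the conjugates of $b$ by the $c_i$, setting
\[
A_n \;=\; \frac{1}{n}\sum_{i=1}^n \lambda(c_i)\,b\,\lambda(c_i^{-1}) \;\in\; J .
\]
Because $c_i\in C_G(swr)$, the $g=w$ summand is invariant under every one of these conjugations, i.e.\ $\lambda(c_i\,swr\,c_i^{-1})=\lambda(swr)$ for all $i$, so this summand contributes exactly $\beta_w\lambda(swr)$, independently of $n$. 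Splitting off this term yields $A_n=\beta_w\lambda(swr)+R_n$, where $R_n=\frac{1}{n}\sum_{g\in H\setminus\{w\}}\beta_g\sum_{i=1}^n\lambda(c_i\,sgr\,c_i^{-1})$.

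It then remains to control $R_n$ and apply a perturbation argument. Using the triangle inequality and the finiteness of $H$,
\[
\|R_n\| \;\le\; \sum_{g\in H\setminus\{w\}} |\beta_g|\cdot \frac{1}{n}\Big\|\sum_{i=1}^n \lambda(c_i\,sgr\,c_i^{-1})\Big\| \;\xrightarrow[n\to\infty]{}\; 0,
\]
since each of the finitely many inner averages tends to $0$ by the limit hypothesis. As $\beta_w\lambda(swr)$ is invertible with inverse of norm $|\beta_w|^{-1}$, once $n$ is large enough that $\|R_n\|<|\beta_w|$ the element $A_n$ is a perturbation of an invertible element by less than $\|(\beta_w\lambda(swr))^{-1}\|^{-1}$, hence is itself invertible in $\cstar{G}$ (by the standard Neumann-series estimate). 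Then $1=A_n^{-1}A_n\in J$, so $J=\cstar{G}$, as desired.

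The argument is essentially mechanical once this setup is in place; the one genuine idea, rather than an obstacle, is recognizing that the two hypotheses dovetail exactly. The $s,r$-conjugation relocates the distinguished generator $w$ to $swr$, and the requirement $c_i\in C_G(swr)$ is precisely what keeps the averaged $w$-term equal to the fixed invertible element $\beta_w\lambda(swr)$ while the averaging annihilates every other term. The only quantitative points needing verification are the norm estimate for $R_n$ and the elementary fact that an element lying within distance $\|u^{-1}\|^{-1}$ of an invertible $u$ is itself invertible.
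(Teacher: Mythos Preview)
Your proof is correct and follows essentially the same approach as the paper: multiply the generator by $\lambda(s)$ on the left and $\lambda(r)$ on the right, average over conjugation by the $c_i$, use $c_i\in C_G(swr)$ to fix the $w$-summand at $\beta_w\lambda(swr)$, and bound the remainder via the limit hypothesis to land within distance $|\beta_w|$ of an invertible element. Your write-up is in fact slightly more careful than the paper's (e.g.\ you use $|\beta_w|$ rather than $\beta_w$ in the final perturbation estimate).
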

\begin{proof}
Let $I$ be an ideal in $C^*_{\lambda}(G)$ generated by $b:=\sum_{g\in H}\beta_g \lambda(g)$. Assume that $I$ is proper. The closure of $I$ is proper, thus we can assume $I$ is closed. Note that $\Sigma_{i=1}^n \lambda(c_is)b\lambda(rc_i^{-1})\in I$. Since $c_i \in C_G(swr)$ we have 
\begin{align*}
\|\lambda (swr)-\frac{1}{\beta_w n}\Sigma_{i=1}^n \lambda(c_is)b\lambda(rc_i^{-1})\|=&\frac{1}{\beta_w n} \|\Sigma_{g\in H\backslash \{w\}}\Sigma_{i=1}^n \beta_g\lambda(c_isgrc_i^{-1})\| \\
\leq& \frac{1}{\beta_w} \Sigma_{g\in H\backslash \{w\}}|\beta_g|\frac{1}{n}\|\Sigma_{i=1}^n \lambda(c_isgrc_i^{-1})\|\\
=&\max(|\beta_g|/ \beta_w:g\in H)\cdot \max(\frac{1}{n}\|\Sigma_{i=1}^n \lambda(c_isgrc_i^{-1})\|:g\in H).
\end{align*}
By our assumptions, the last quantity can be arbitrarily small for large $n$. Thus there is an element in $I$ which is on distance less then $1$ to a unitary operator, this implies that it is invertible and $I=\cstar{G}$.
\end{proof}

Applying the theorem above  to the set $H\cup \{e\}$ shows that the following condition implies simplicity of $C^*_{\lambda}(G)$ :

\begin{condition}\label{weak-kesten}
For all finite non-empty subsets $H\subset G$, $e\not\in H$ and for all positive integers $n$ there are $r$, $s\in G$ and a set $\{c_1,c_2,\ldots,c_n\}\subset C_G(sr)$  such that the set $\{c_k(sgr)c_k^{-1}: k=1,\ldots,n\}$ is free for all $g\in H$.
\end{condition}

Condition \ref{kesten-test} implies Condition \ref{weak-kesten} and it seems that the other implication is false.  However, Condition \ref{weak-kesten} is still inadequate for showing that $\cstar{T}$ is simple.
\begin{lemma}
There are $g_1$, $g_2\in T\backslash \{e\}$ so that for any $r$, $s\in T$ there are no elements $c_1$, $c_2$, $c_3$, and $c_4\in C_T(sr)$ with both $G_1=\langle (sg_1r)^{c_1},(sg_1r)^{c_2}, (sg_1r)^{c_3},(sg_1r)^{c_4}\rangle$ and $G_2=\langle (sg_2r)^{c_1},(sg_2r)^{c_2}, (sg_2r)^{c_3},(sg_2r)^{c_4}\rangle$ free on four generators.
\end{lemma}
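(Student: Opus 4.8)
The plan is to adapt the argument of Corollary \ref{negative} to the twisted setting of Condition \ref{weak-kesten}, choosing $g_1,g_2\in T\backslash\{e\}$ with disjoint supports (for instance one supported in $[0,\frac12]$ and the other in $[\frac12,1]$). Write $u=sr$ and $\tilde g_i=sg_is^{-1}$, so that $\tilde g_1,\tilde g_2$ again have disjoint supports $A:=Supp(\tilde g_1)$ and $B:=Supp(\tilde g_2)$, with $A\cap B=\emptyset$ and, crucially, $A\cup B\neq S^1$: two disjoint nonempty supports cannot cover $S^1$, since the common complement $s\cdot(Fix(g_1)\cap Fix(g_2))$ is nonempty (if it were empty the sets $Supp(g_1)$ and $Supp(g_2)$ would be complementary open sets, hence clopen, forcing one of them to be all of $S^1$). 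The main obstacle is precisely the extra factor $u$, which is what distinguishes Condition \ref{weak-kesten} from Condition \ref{kesten-test}: unlike in Corollary \ref{negative}, the generator $(sg_ir)^{c_j}$ is \emph{not} a conjugate of $g_i$, so its support is not a translate of $Supp(g_i)$ and the support bookkeeping of Remark \ref{disjointSupportsAndConjugation} does not apply directly. I would resolve this using $c_j\in C_T(sr)$: inserting $s^{-1}s$ and using $c_juc_j^{-1}=u$ gives $(sg_ir)^{c_j}=\tilde g_i^{\,c_j}u$, so that in the \emph{difference} elements the factor $u$ cancels,
\[
(sg_ir)^{c_j}\big((sg_ir)^{c_k}\big)^{-1}=\tilde g_i^{\,c_j}\big(\tilde g_i^{\,c_k}\big)^{-1},
\]
which \emph{is} a product of conjugates of $\tilde g_i$ and hence has support contained in $c_jA\cup c_kA$ (respectively $c_jB\cup c_kB$).

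Next I would pass from freeness to a covering statement. Assume for contradiction that both $G_1$ and $G_2$ are free of rank $4$, and identify $G_1\cong F_4=\langle x_1,x_2,x_3,x_4\rangle$ via $x_j\mapsto(sg_1r)^{c_j}$. For any triple $\{i,j,k\}$ the elements $x_ix_j^{-1}$ and $x_ix_k^{-1}$ do not commute (an elementary length computation in $F_4$), so they generate a free subgroup of rank $2$. By the displayed identity these correspond to $\tilde g_1^{\,c_i}(\tilde g_1^{\,c_j})^{-1}$ and $\tilde g_1^{\,c_i}(\tilde g_1^{\,c_k})^{-1}$, a rank-$2$ free subgroup of $T$. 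By Brin and Squier's theorem \cite{BSPLR} this subgroup can have no global fixed point on $S^1$: otherwise its two generators would share a fixed point $p$, the whole subgroup would restrict to a group of PL homeomorphisms of the interval $S^1\backslash\{p\}$, and a nonabelian free group of PL homeomorphisms of an interval is impossible. Hence the supports of the two generators cover $S^1$, and by the support containment above this forces $c_iA\cup c_jA\cup c_kA=S^1$. Since this holds for \emph{every} triple, every point of $S^1$ lies in at least two of the four sets $c_1A,\dots,c_4A$ (a point lying in at most one would be omitted by some triple). The identical argument applied to $G_2$ shows every point lies in at least two of $c_1B,\dots,c_4B$.

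Finally I would close with a counting contradiction, which is exactly where the rank being $4$ (rather than $2$) is essential. For each index $j$ the sets $c_jA$ and $c_jB=c_j\cdot Supp(\tilde g_2)$ are disjoint, since $A\cap B=\emptyset$; so no point lies in both, and for any $x\in S^1$ the number of indices $j$ with $x\in c_jA$ plus the number with $x\in c_jB$ is at most $4$. Combined with the two ``at least two'' bounds above, both counts must equal exactly $2$, and for every $j$ the point $x$ lies in $c_jA$ or in $c_jB$. Thus $x\in c_j(A\cup B)$ for all $j$ and all $x$, i.e. $c_j(A\cup B)=S^1$; but $c_j$ is a bijection and $A\cup B\neq S^1$, a contradiction. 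Hence $G_1$ and $G_2$ cannot both be free of rank $4$. One sees that the argument genuinely needs four conjugators: with only three the disjointness budget of $3$ against two lower bounds of $1$ produces no contradiction, which is consistent with rank $4$ being the natural threshold for defeating the weakened condition.
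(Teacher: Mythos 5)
Your proof is correct, and while it shares the paper's opening moves, it finishes by a genuinely different argument. Like the paper, you start from $g_1,g_2$ with disjoint supports, use $c_j\in C_T(sr)$ to cancel the twist $u=sr$ in the difference elements $(sg_ir)^{c_j}\bigl((sg_ir)^{c_k}\bigr)^{-1}=\tilde g_i^{\,c_j}\bigl(\tilde g_i^{\,c_k}\bigr)^{-1}$, and invoke Brin--Squier to rule out common fixed points of two such elements; in the paper this appears dually, phrased in terms of the sets $X_{ij}=(c_jr^{-1})\cdot Fix(g_i)$ on which $k_{ij}$ acts as $sr$, with the conclusion that every triple $X_{ia}\cap X_{ib}\cap X_{ic}$ is empty (equivalent to your statement that every triple $c_aA\cup c_bA\cup c_cA$ covers $S^1$). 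Where you diverge is the endgame. The paper establishes that every triple of the $X_{i*}$ also \emph{covers} the circle (via the complementarity $X_{1*}=\overline{S^1\setminus X_{2*}}$) and then runs a connectivity argument on closed intervals of $S^1$ --- pairwise intersections are nonempty intervals, pairwise unions miss points, and chasing endpoints forces $X_{i1}\cap X_{i3}\cap X_{i4}\neq\emptyset$. Your counting argument ($n_A(x)\geq 2$, $n_B(x)\geq 2$, $n_A(x)+n_B(x)\leq 4$ by disjointness of $c_jA$ and $c_jB$, hence $c_j(A\cup B)=S^1$, contradicting $A\cup B\neq S^1$) replaces all of that. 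It is more elementary and strictly more general: it never uses that the supports are intervals, only that they are disjoint, nonempty, and do not jointly cover the circle, so it would apply verbatim to elements with disconnected supports. The paper's interval combinatorics, on the other hand, extracts finer geometric information (which pairs of the $X_{ij}$ must overlap and how), which is what motivates its subsequent remark that three conjugators might still succeed; your closing observation about the ``budget'' of four recovers that same threshold from the counting side.
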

\begin{proof}
Let $g_1$, $g_2\in T$ so that $Supp(g_1)=(0,1/2)$ and $Supp(g_2)=(1/2,1)$.  Let $r$ and $s\in T$ and suppose $c_1$, $c_2$, $c_3$ and $c_4\in C_T(sr)$.  Set $k_{ij}=(sg_ir)^{c_j}$, for $i$, $j\in \{1,2,3,4\}$, and suppose that  $c_1$, $c_2$, $c_3$ and $c_4$ were so chosen so that $G_i=\langle k_{i1},k_{i2}, k_{i3}, k_{i4}\rangle$ is free on four generators for $1\leq i\leq 2$.

Consider the intervals $X_{i1}=(c_1r^{-1})\cdot Fix(g_i)$, $X_{i2}=(c_2r^{-1})\cdot Fix(g_i)$, $X_{i3}=(c_3r^{-1})\cdot Fix(g_i)$, and $X_{i4}=(c_4r^{-1})\cdot Fix(g_i)$.  If $x_{ij}\in X_{ij}$, then $k_{ij}\cdot x_{ij} = c_jsg_irc_j^{-1}\cdot x_{ij} =(c_jsg_i)\cdot y_{ij}=(c_js)\cdot y_{ij} = (c_jsrc_j^{-1})\cdot x_{ij} = (sr)\cdot x_{ij}$, as for all $i$, $j$ we have $y_{ij}\in Fix(g_i)$.  That is, $k_{ij}$ acts as $sr$ over $X_{ij}$.

Further, consider the elements $f_{i,ab}= k_{ia}^{-1}k_{ib}$, where $i\in \{1,2\}$ and $a\neq b \in \{1,2,3,4\}$.  It is immediate that $\langle f_{i,ab},f_{i,cd}\rangle$ is free on two generators if either $b\neq c$ or $d\neq a$.  Therefore, by Brin and Squier's result (from \cite{BSPLR}) that $PL_o(I)$ has no non-abelian free subgroups, we know that $Fix(f_{i,ab})\cap Fix(f_{i,cd})=\emptyset$ for $i\in\{1,2\}$ and either $b\neq c$ or $d\neq a$.  Now, for instance, if there is an index $i$ and some point $p\in X_{i1}\cap X_{i2}\cap X_{i3}$,  then both $f_{i,12}$ and $f_{i,13}$ must fix $p$, which is a contradiction.  Therefore we see that $X_{i1}$, $X_{i2}$, and $X_{i3}$ cannot share a common point for any index $i$.  By the same argument, for any valid indices $i$, $a$, $b$, and $c$ (where $i\in \{1,2\}$ and $a\neq b\neq c\neq a$) we see that $X_{ia}\cap X_{ib}\cap X_{ic}=\emptyset$.

One now sees immediately that for any valid indices $i$, $a$,$b$, and $c$ (where $i\in \{1,2\}$ and $a\neq b\neq c\neq a$) we must also have that $X_{ia}\cup X_{ib}\cup X_{ic}=S^1$.  This follows as otherwise there is some point $p$ in the intersection $X_{ja}\cap X_{jb}\cap X_{jc}$ for the index $j\neq i$ (since $X_{1*} = \overline{S^1\backslash X_{2*}}$ for any index $*$).  

Suppose that for some indices $i$, $a\neq b$ we have that $X_{ia}\subset X_{ib}$, and let $c$ and $d$ be the two remaining distinct indices of $\{1,2,3,4\}\backslash\{a,b\}$.  Let $p$ be an endpoint of $X_{ib}$.  We have that $p$ must be in both $X_{ic}$ and $X_{id}$, otherwise their will be some point $q \in S^1\backslash X_{ib}$ which is near to $p$ so that $q$ is not in either of $X_{ia}\cup X_{ib}\cup X_{ic}= X_{ib}\cup X_{ic}$ or $X_{ia}\cup X_{ib}\cup X_{id}= X_{ib}\cup X_{id}$.  But this contradicts the fact that $X_{ib}\cap X_{ic}\cap X_{id}= \emptyset$.

It now immediately follows that for any index $i$ and two distinct indices $a$ and $b$, we have that $X_{ia}\cap X_{ib}$ is a non-empty closed interval (possibly a single point) while $X_{ia}\cup X_{ib}$ is also a closed interval which misses some points in $S^1$.

But now we are done as follows.  For any index $i$ the intervals $X_{i1}$ $X_{i2}$ and $X_{i3}$ cover the circle, and have the properties that each pair of sets intersects in an interval, and  no pair covers the whole circle.  Now consider $X_{i4}$.  It must likewise intersect both $X_{i1}$ and $X_{i2}$ non-trivially, and the union of $X_{i1}$, $X_{i2}$ and $X_{i4}$ also covers the whole circle.  Therefore the end of $X_{i1}$ which is not in $X_{i2}$ is in both $X_{i3}$ and $X_{i4}$.  Hence $X_{i1}\cap X_{i3}\cap X_{i4}\neq \emptyset$, which implies that the group $G_i$ cannot be free on four generators, as $f_{i,13}$ and $f_{i,14}$ share a common fixed point and will not generate a free subgroup of $G_i$.
\end{proof}

\begin{remark}We observe that it is still plausible that even with $g_1$ and $g_2$ as in the proof above (supports over $(0,1/2)$ and $(1/2,1)$, respectively), one could plausibly find $r$, $s$, and $c_1$, $c_2$, and $c_3\in C_T(sr)$ so that setting $k_{ij}= c_jsg_irc_j^{-1}$ as above we would have $H_r=\langle k_{r1}, k_{r2}, k_{r3}\rangle$ free on three generators for both $r=1$ and $r=2$, where the related claim for even two generator free groups could not be conceived of under Condition \ref{kesten-test}.
 \end{remark}
 
\section{A Ping-Pong Lemma for orientation preserving homeomorphisms of $S^1$}\label{p-pong}

In this section, we prove a version of the Ping-Pong Lemma which we are using in our main argument.  In the notations below we write all actions as left actions, in keeping with the tradition in the $C^*$ literature, although much Thompson groups literature uses right action. In particular, if $x\in S^1$ and $s$,$t\in T$, we write $tx$ for the image of $x$ under $t$, and the conjugation $s^t:= sts^{-1}$, which means, apply $s^{-1}$ first, then $t$, and then $s$. We consider finite sets with repetitions. 

In support of that lemma we ask the reader to recall an ordinary statement of Fricke and Klein's Ping-Pong Lemma (first proven in \cite{FrickeKlein}, but we give a different statement), and two further facts, one quite classical.

\begin{lemma}(Ping-Pong Lemma)\label{ping-pong} Let $G$ be a group of permutations on a set $X$, and let $a$, $b\in G$, where $b^2\neq 1$.  If $X_a$ and $X_b$ are two subsets of $X$ so that neither is contained in the other, and for all integers $n$ we have $b^n\cdot X_a\subset X_b$ whenever $b^n\neq 1$, and $a^n\cdot X_b\subset X_a$ whenever $a^n\neq 1$, then $\langle a,b\rangle$ factors naturally as the free product of $\langle a\rangle$ and $\langle b\rangle$.   In particular, $\langle a,b\rangle\cong \langle a\rangle *\langle b\rangle$.
\end{lemma}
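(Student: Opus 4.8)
The plan is to use the standard normal-form characterization of free products: $\langle a,b\rangle\cong\langle a\rangle*\langle b\rangle$ precisely when every nontrivial \emph{reduced word} --- an alternating product $w=s_1s_2\cdots s_\ell$ whose syllables $s_j$ come alternately from $\langle a\rangle\setminus\{1\}$ and $\langle b\rangle\setminus\{1\}$ --- acts nontrivially on $X$. So I would fix such a $w$ and exhibit a point it moves. First I would record that the hypothesis ``neither of $X_a$, $X_b$ is contained in the other'' supplies points $p\in X_b\setminus X_a$ and $q\in X_a\setminus X_b$; these are the ping-pong targets.

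The core computation is the case where $w$ begins and ends with a power of $a$, say $w=a^{n_1}b^{m_1}\cdots b^{m_{k-1}}a^{n_k}$ with every syllable nontrivial. Reading from the right and applying the two inclusion hypotheses in alternation, I get $a^{n_k}\cdot p\in X_a$ (since $p\in X_b$), then $b^{m_{k-1}}\cdot(\,\cdot\,)\in X_b$, and so on, so that $w\cdot p\in X_a$. As $p\notin X_a$, this forces $w\cdot p\neq p$, hence $w\neq\mathrm{id}$. This settles the ``$a\ldots a$'' pattern outright.

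I would reduce the remaining syllable-type patterns to this one by conjugation, using that $w\neq\mathrm{id}$ is conjugation-invariant and that conjugating by a power of $a$ or $b$ placed next to an opposite syllable introduces no cancellation (so the conjugate is again reduced). A word of the form $b\ldots b$ becomes an $a\ldots a$ word after conjugating by any $a^s\neq 1$ (such $s$ exists unless $a=1$, a degenerate case in which $\langle a,b\rangle=\langle b\rangle$ and there is nothing to prove).

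The genuinely delicate case, and the step I expect to be the main obstacle, is a mixed word, say $w=a^{n_1}\cdots a^{n_k}b^{m_k}$ beginning with an $a$-syllable and ending with a $b$-syllable. Here I would conjugate by a power $b^t$, turning $w$ into $b^{-t}a^{n_1}\cdots a^{n_k}b^{m_k+t}$, which is of the already-settled $b\ldots b$ type \emph{precisely when} both $b^t\neq 1$ and $b^{m_k+t}\neq 1$. Selecting such a $t$ is exactly where the hypothesis $b^2\neq 1$ enters: if $b$ has infinite order a generic $t$ works, while if $b$ has finite order $d$ I must avoid the at most two residues $t\equiv 0$ and $t\equiv -m_k\pmod d$, which is possible iff $d\geq 3$, i.e. iff $b^2\neq 1$. (When $d=2$ these two forbidden residues exhaust $\Z/2$, so no such $t$ exists; this is the sole use of the order hypothesis and explains the asymmetry between $a$ and $b$ in the statement.) The symmetric pattern $b\ldots a$ is handled identically, conjugating on the other side. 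Once every reduced word is shown to act nontrivially, the factorization $\langle a,b\rangle\cong\langle a\rangle*\langle b\rangle$ follows.
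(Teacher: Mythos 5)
The paper states this lemma without proof, citing Fricke--Klein and treating it as classical background, so there is no in-paper argument to compare against. Your proposal is a correct and complete proof, and it is the standard one: reduce to the normal-form criterion for free products, run the alternating ``ping-pong'' computation on a point $p\in X_b\setminus X_a$ to kill every reduced word of syllable type $a\cdots a$, and dispose of the other three syllable types by conjugation, using $b^2\neq 1$ exactly where it is needed (to find $t$ with both $b^t\neq 1$ and $b^{m_k+t}\neq 1$ when the word ends in a $b$-syllable). Two small observations. First, your argument never actually uses the point $q\in X_a\setminus X_b$: only $X_b\not\subset X_a$ is needed, so you are proving the lemma under a slightly weaker hypothesis than stated, which is harmless. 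Second, it is worth noting that the paper's hypothesis is ``neither set is contained in the other'' rather than the more commonly seen ``$X_a\cap X_b=\emptyset$''; your proof correctly works with this weaker assumption, since all that the ping-pong computation requires is one point of $X_b$ outside $X_a$. Your identification of $b^2\neq 1$ as the source of the asymmetry between $a$ and $b$ in the statement is also exactly right.
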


Suppose that $f:S^1\rightarrow S^1$ is an orientation preserving homeomorphism of the circle $S^1=\mathbb{R}/\mathbb{Z}$, then $f$ may be lifted to a homeomorphism of $\mathbb{R}$ by $F(x+m)=F(x)+m$ for every  $x$ and $m$. The rotation number of $f$ is defined to be $Rot(f)=\lim_{n\rightarrow \infty} (F^n(x)-x)/{n}$.
The following theorem is generally relevant to the arguments in the final section of this paper, and appears first in \cite{GhysSergiescu}, although there now exist many different proofs, the shortest of which appears to be in \cite{BKMStructure}.
\begin{theorem}
Every element of Thompson's group $T$ has rational rotation number.
\end{theorem}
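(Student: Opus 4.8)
The plan is to show that every element of Thompson's group $T$ has rational rotation number by exploiting the combinatorial/dyadic structure of $T$. Recall that an element $f \in T$ is a piecewise-linear orientation-preserving homeomorphism of $S^1 = \mathbb{R}/\mathbb{Z}$ whose breakpoints lie in the dyadic rationals $\mathbb{Z}[\frac{1}{2}]$ and whose slopes are powers of $2$, and which moreover carries dyadic rationals to dyadic rationals bijectively. The key observation I would rely on is the classical fact (which I would invoke) that for an orientation-preserving circle homeomorphism, the rotation number is rational \emph{if and only if} $f$ has a periodic point, i.e.\ there exist a point $p \in S^1$ and a positive integer $q$ with $f^q(p) = p$. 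So the entire problem reduces to producing a periodic point for an arbitrary $f \in T$.

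First I would set up the reduction carefully: fix $f \in T$, lift it to $F : \mathbb{R} \to \mathbb{R}$ with $F(x+1) = F(x)+1$, and recall that $Rot(f)$ is well-defined modulo $\mathbb{Z}$ and independent of $x$. I would then note that it suffices to find a single periodic orbit. The heart of the argument is a finiteness/pigeonhole step: since $f$ maps $\mathbb{Z}[\frac{1}{2}]$ to itself bijectively, I would like to find an $f$-invariant finite subset of dyadic rationals, or at least a dyadic point whose forward orbit is finite. Concretely, pick a dyadic point $p$ and track its orbit $p, f(p), f^2(p), \ldots$. If this orbit ever repeats we are done, so the difficulty is controlling the \emph{denominators} appearing along the orbit: a priori $f^n(p)$ could have ever-larger powers of $2$ in its denominator.

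The main obstacle, then, is exactly this denominator control — ruling out orbits that escape to dyadics of unboundedly high level. I would address it by bounding the complexity of $f$ by a single integer. Since $f$ is determined by a finite piecewise-linear pattern, there is an integer $N$ such that all breakpoints of $f$ and all breakpoints of $f^{-1}$ lie in $\frac{1}{2^N}\mathbb{Z}/\mathbb{Z}$, and such that on each piece $f$ has the affine form $x \mapsto 2^{k}x + c$ with $c \in \frac{1}{2^N}\mathbb{Z}$ and $|k|$ bounded. The intended conclusion is that the set $\frac{1}{2^N}\mathbb{Z}/\mathbb{Z}$, or a suitable finite refinement of the breakpoint partition into intervals of linearity, is mapped into itself in a controlled way, so that the dynamics of $f$ on this finite combinatorial skeleton is a permutation of finitely many objects; the induced action must have finite order, forcing a periodic point. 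I expect that making this skeleton genuinely $f$-invariant is the delicate point, and the cleanest route may be to pass to the combinatorial description of $T$ as pairs of binary trees (or equivalently to note that the orientation-preserving circle homeomorphism $f$, having a $\mathbb{Z}[\frac{1}{2}]$-structure, must permute a canonical finite ``standard dyadic partition'' once refined enough to be compatible with both $f$ and $f^{-1}$).

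Having produced a periodic point $p$ with $f^q(p)=p$, the proof concludes immediately: the rotation number of $f^q$ is then an integer, so $Rot(f) \in \frac{1}{q}\mathbb{Z}$ is rational, completing the argument. An alternative, and perhaps more robust, strategy I would keep in reserve is to argue directly that an element of $T$ with \emph{irrational} rotation number would have to be semi-conjugate to an irrational rotation and hence have no periodic dyadic structure, contradicting the eventual periodicity forced by the finite-level dyadic combinatorics; but I believe the periodic-point route above is the shorter and more self-contained one, with the denominator-boundedness lemma being the genuine technical content.
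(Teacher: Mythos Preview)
The paper does not prove this theorem at all: it is quoted as background and attributed to Ghys--Sergiescu, with a reference to a short proof in \cite{BKMStructure}. So there is no ``paper's own proof'' to compare against; your proposal must stand or fall on its own.

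On its own, the proposal has a genuine gap precisely at the point you yourself flag as ``the delicate point.'' The reduction to finding a periodic point is correct and standard, but the claimed mechanism for producing one does not work as written. The set $\frac{1}{2^N}\mathbb{Z}/\mathbb{Z}$ is \emph{not} $f$-invariant for a general $f\in T$: on a linearity interval where $f$ has slope $2^{-1}$, a dyadic of level $N$ is sent to one of level $N+1$, so denominators can and do grow along an orbit. The tree-pair description does not rescue this directly either, since the domain partition and the range partition of an element of $T$ are typically different, so neither set of partition points is carried to itself. In short, exhibiting a finite $f$-invariant set is exactly equivalent to exhibiting a periodic orbit, which is the very thing to be proved; your argument assumes it rather than establishes it. The known proofs proceed differently --- for instance by assuming the rotation number is irrational, passing to the (always available) semiconjugacy to an irrational rotation, and deriving a contradiction from the fact that slopes in $T$ take only the discrete values $2^k$ --- and some genuinely new idea of that sort is needed here.
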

The last tool we need in order to establish our own version of the Ping Pong lemma is the following classical result of Poincar\`e.
\begin{lemma}(Poincar\`es Lemma, circa 1905)
If $f$ is an orientation preserving homeomorphism of $S^1$ and $f$ has rotation number $p/q$ in lowest terms, then there is an orbit in $S^1$ of size exactly $q$ under the action of $\langle f\rangle$.
\end{lemma}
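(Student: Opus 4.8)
\emph{Approach.} The plan is to prove the two standard facts behind Poincar\`e's theorem: that a rational rotation number forces a periodic orbit, and that when $p/q$ is in lowest terms such an orbit has size exactly $q$. Throughout I work with a lift $F\colon\mathbb{R}\to\mathbb{R}$ of $f$ satisfying $F(x+m)=F(x)+m$, and I use three classical facts: $F$ is an increasing homeomorphism of $\mathbb{R}$, it commutes with integer translation, and $Rot(f)=\lim_{n\to\infty}(F^n(x)-x)/n$ exists and is independent of $x$.

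First I would reduce matters to producing a point $x_0$ with $F^q(x_0)=x_0+p$; such a point projects to a point of $S^1$ fixed by $f^q$, hence lying on an orbit whose size divides $q$. Set $G:=F^q$, a lift of $f^q$, so that $G(x+1)=G(x)+1$ and $\lim_n(G^n(x)-x)/n=q\,Rot(f)=p$. Consider $\varphi(x):=G(x)-x-p$, which is continuous and $1$-periodic. I claim $\varphi$ vanishes somewhere. If not, then by periodicity $\varphi$ is bounded away from $0$ with a constant sign, say $G(x)-p\ge x+\delta$ for some $\delta>0$ and all $x$. Writing $\widetilde G:=G-p$ (again a lift of $f^q$, increasing, commuting with integer translation), the one-step bound $\widetilde G(x)\ge x+\delta$ promotes by induction and monotonicity to $\widetilde G^{\,n}(x)\ge x+n\delta$, equivalently $G^n(x)-np\ge x+n\delta$. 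Hence $\lim_n(G^n(x)-x)/n\ge p+\delta>p$, contradicting that this limit equals $p$; the opposite sign gives the symmetric contradiction. So $\varphi(x_0)=0$ for some $x_0$, the desired periodic point.

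It remains to fix the orbit size at exactly $q$. Let $d$ be the minimal period of the projected point, so $d\mid q$ and $F^d(x_0)=x_0+k$ for some integer $k$. Iterating the translation-commutation relation gives $F^{dm}(x_0)=x_0+mk$ for all $m$, whence $Rot(f)=\lim_m (F^{dm}(x_0)-x_0)/(dm)=k/d$. Thus $k/d=p/q$ with $p/q$ in lowest terms, which forces $q\mid d$; together with $d\mid q$ this gives $d=q$, so the orbit has size exactly $q$.

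I expect the genuine obstacle to be the first step, extracting an honest fixed point of $G-p$ from the hypothesis on the rotation number. The delicate point is the monotone-iteration estimate $\widetilde G^{\,n}(x)\ge x+n\delta$: it is exactly the order-preservation of $\widetilde G$ together with the identity $\widetilde G(x+m)=\widetilde G(x)+m$ that lets a single-step lower bound be promoted to an $n$-step one, converting the qualitative sign statement about $\varphi$ into the quantitative lower bound that yields the contradiction. Once existence is secured, identifying the period is a short computation with the rotation number and the coprimality of $p$ and $q$.
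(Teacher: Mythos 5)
Your proof is correct. Note that the paper does not actually prove this lemma: it is quoted as a classical result of Poincar\`e and used as a black box, so there is no in-paper argument to compare against. Your write-up is the standard textbook proof --- extract a fixed point of $F^q - p$ via the intermediate value theorem applied to the $1$-periodic function $\varphi(x)=F^q(x)-x-p$ (using the monotone-iteration estimate to rule out a nonvanishing $\varphi$ against the value of the rotation number), then pin down the exact period from $k/d=p/q$ in lowest terms. All the steps check out, including the identity $\widetilde G^{\,n}=G^n-np$ (which uses commutation with integer translation) and the passage from the subsequence $n=dm$ to the full rotation-number limit, which is legitimate because the limit is assumed to exist, exactly as in the paper's definition of $Rot(f)$.
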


We are now in a good position to quote and prove our main technical tool.

\begin{lemma}\label{free-powering-one}
Suppose $a$ and $b$ are orientation preserving homeomorphisms of the circle $S^1$ with rational rotation numbers $Rot(a)=p/q$ and $Rot(b)=r/s$ in lowest non-negative terms where
\begin{enumerate}
\item  $b$ is not torsion, and 
\item  if $x\in Fix(b^s)$ and $j\in\Z$ with $a^j\neq 1_T$, then we have $a^jx\not\in Fix(b^s)$,
\end{enumerate}
then, there is a positive integer $k$ so that $a$ and $b^k$ are a free basis for the group $\langle a, b^k\rangle$.
\end{lemma}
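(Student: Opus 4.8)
The plan is to reduce to the fixed-point-free dynamics of a power of $b$ and then apply the Ping-Pong Lemma (Lemma \ref{ping-pong}) to $a$ and a high power of $b$, where the two halves of the ping-pong table are a small neighborhood of $Fix(b^s)$ and its complement.

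First I would replace $b$ by $\beta := b^{s}$. Since $Rot(b)=r/s$ in lowest terms, Poincar\`e's Lemma gives an orbit of $\langle b\rangle$ of size exactly $s$, so $\beta$ has a fixed point and $K := Fix(\beta)=Fix(b^{s})$ is nonempty; as $b$ is not torsion, $\beta\neq 1_T$, so $K$ is also a proper closed subset of $S^{1}$. Writing $U := Supp(\beta)$, the map $\beta$ fixes $K$ pointwise, preserves each connected component of $U$, and on each such component it is fixed-point-free and hence moves every point monotonically toward an endpoint. A routine compactness argument then yields the \emph{contraction property}: for every compact $C\subseteq U$ and every open $Y\supseteq K$ there is an $M$ with $\beta^{\ell}C\subseteq Y$ for all $|\ell|\geq M$ (a compact subset of $U$ meets only finitely many components, and on each the forward and backward iterates converge uniformly to the endpoints, which lie in $K\subseteq Y$). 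Finally I would record the reformulation of hypothesis (2): it says exactly that $a^{j}K\cap K=\emptyset$ whenever $a^{j}\neq 1_T$.

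The heart of the argument is to thicken $K$ to a genuinely \emph{wandering} neighborhood for $\langle a\rangle$, namely an open $Y\supseteq K$ with $a^{n}Y\cap Y=\emptyset$ for all $n\neq 0$. Here the rationality of $Rot(a)=p/q$ is essential: for an orientation-preserving circle homeomorphism with rational rotation number, every orbit is forward- and backward-asymptotic to the set of periodic points, and that set is exactly $Fix(a^{q})$. Taking $j=q$ in hypothesis (2) gives $K\cap Fix(a^{q})=\emptyset$ (when $a^{q}\neq 1_T$), so the compact set $K$ is a positive distance from the only possible accumulation set of the $\langle a\rangle$-orbit of $K$. Using this, together with the disjointness $a^{n}K\cap K=\emptyset$ for the finitely many ``near'' returns and the $q$-fold permutation of the complementary arcs of $Fix(a^{q})$, one thickens $K$ to the desired wandering open set $Y$; if $a$ is torsion this step is immediate, since only finitely many powers of $a$ occur. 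This is the step I expect to be the main obstacle: the Ping-Pong Lemma requires an inclusion for \emph{every} power of $a$, whereas hypothesis (2) only controls the image of $K$ itself, so the recurrence theory of circle maps is precisely what bridges the gap, and is the reason the rationality of $Rot(a)$ cannot be dropped.

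With $Y$ in hand the ping-pong is clean. Put $X_{b}:=Y$ and $X_{a}:=S^{1}\setminus Y$; these are disjoint and nonempty (the wandering property forces $Y\neq S^{1}$), so neither contains the other. For every $n$ with $a^{n}\neq 1_T$ we have $a^{n}X_{b}=a^{n}Y\subseteq S^{1}\setminus Y=X_{a}$ by the wandering property. Since $K\subseteq Y$, the compact set $X_{a}=S^{1}\setminus Y$ lies in $U$, so by the contraction property I may fix $m$ large enough that $\beta^{\ell}X_{a}\subseteq Y$ for all $|\ell|\geq m$; then for every $n\neq 0$ we get $(\beta^{m})^{n}X_{a}=\beta^{mn}X_{a}\subseteq Y=X_{b}$. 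As $b$ is not torsion, $(\beta^{m})^{2}=b^{2sm}\neq 1_T$, so Lemma \ref{ping-pong} applies and yields $\langle a,b^{k}\rangle\cong\langle a\rangle\ast\langle b^{k}\rangle$ with $k:=sm$. Finally $b^{k}=\beta^{m}$ is non-torsion, so $\langle b^{k}\rangle\cong\Z$, and the free product is free of rank two with basis $\{a,b^{k}\}$ (this last identification uses that $a$ has infinite order, which holds in the intended applications).
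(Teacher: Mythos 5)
Your proposal is correct and follows essentially the same route as the paper: replace $b$ by $b^s$, use the rationality of $Rot(a)$ (attraction of orbits to $Fix(a^q)$, which hypothesis (2) forces to be disjoint from $Fix(b^s)$) to control all but finitely many powers of $a$, use hypothesis (2) directly for the remaining finitely many powers, and then play ping-pong between a small neighbourhood of $Fix(b^s)$ and its complement against a sufficiently high power of $b$. The differences are cosmetic --- you take $X_a$ to be the full complement of your wandering neighbourhood where the paper assembles it explicitly as $N_\epsilon(Fix(a^q))\cup V_\delta$ --- and your closing caveat that a torsion $a$ only yields $\langle a\rangle * \langle b^k\rangle$ rather than a rank-two free group is an imprecision already present in the paper's own statement and proof.
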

\begin{proof}
In the proof below, let us take $a$, $b\in T$ and $p$, $q$, $r$, $s\in \N$ as in the statement of the lemma. 
Set $b_0:=b$.  We will occasionally update to a new version of $b$, which will be given by a new index.  The new $b$ will always be an integral power of the previous indexed $b$.

Set $b_1:=b_0^s$.  The element $b_1$ will have rotation number $0/1$ in lowest non-negative terms.  For $b_1$, we have $Fix(b_1)$ is not empty, and also not the whole circle (else $b$ was originally a torsion element in $T$).

\newcommand{\acta}{under the action of $\langle a\rangle$}
\newcommand{\actb}{under the action of $\langle b\rangle$}
\newcommand{\indexI}{\mathscr{I}}

Let $\mathscr{I}\subset S^1$ be such that for each component $C$ of $Supp(b_1)$, we have $|C\cap \indexI|=1$, and associate each such $C$ with its unique point in $\indexI$, so that $\indexI$ becomes an index set for the components of $Supp(b_1)$.  We observe that $\indexI$ comes with an inherent circular order as a subset of $S^1$.  Let $L_b$ represent the set of limit points of $\indexI$ which are not in $\indexI$, and observe that $L_b\subset Fix(b_1)$.

For each positive integer $d$, set $\Delta_d:=[-d,d]\cap(\Z\backslash\{0\})$, the set of non-zero integers a distance $d$ or less from zero.  Now for all positive integers $d$ we can set $\epsilon_d$ to be one half of the distance from $Fix(b_1)$ to the set $\cup _{i\in\Delta_d} a^i\cdot Fix(b_1)$.  Noting that these $\epsilon_d$ are all well defined and non-zero (unless $a$ is torsion) as the sets involved are compact and as $a^m\cdot Fix(b_1)\cap a^n\cdot Fix(b_1) \neq \emptyset$ implies that either $m=n$ or that $a$ is torsion and $n-m$ is divisible by the order of $a$.

 Our analysis now splits, depending on whether or not $a$ is torsion.  In the case that $a$ is torsion, our proof is somewhat easier, so we will execute that proof immediately.

\vspace{.1 in}
{\flushleft {\it \underline{Case}: $a$ is torsion with order $q$.}}

In this case, the value $\epsilon_{q-1}$ explicitly measures one half of the distance between $Fix(b_1)$ and the union of the images of $Fix(b_1)$ under the action of non-trivial powers of $a$.  Set $\mathcal{U}$ to be the open $\epsilon_{q-1}$ neighbourhood of $Fix(b_1)$, and observe that for each integer $i\in \{1,2,\ldots ,q-1\}$ we have $a^i\cdot Fix(b_1)\cap \mathcal{U}=\emptyset$.  For each non-zero $i\in \left\{1,2,\ldots q-1\right\}$ set $\mathcal{U}_i$ to be the $\epsilon_{q-1}$ neighbourhood of $a^i\cdot Fix(b)$.  Again, for all such indices $i$, $\mathcal{U}\cap\mathcal{U}_i=\emptyset$.  Set 
\[
X_b:= \mathcal{U}\bigcap_{1\leq i<q}(a^{q-i}\cdot\mathcal{U}_i). 
\]  Now by construction we have that the image set $a^i\cdot X_b\subset \mathcal{U}_i$, but $\mathcal{U}_i\cap X_b=\emptyset$ since $\mathcal{U}_i\cap \mathcal{U}=\emptyset$ and $X_b\subset \mathcal{U}$.  Thus, $X_b$ is an open set containing $Fix(b_1)$ that is disjoint from its image under the action of any nontrivial power of $a$.

As $X_b$ and the components of support of $b_1$ altogether cover the circle, there is a finite set of open interval components of $Supp(b_1)$ which together with $X_b$ covers the circle.  In turn, this implies there is a minimal positive integer $v$ so that for all $x\in S^1\backslash X_b$, we have $b_1^v x\in X_b$ and $b_1^{-v} x\in X_b$. 

 Now we can set
\[
X_a:= \cup_{1\leq i<q} (a^i\cdot X_b).
\]
With this choice of $X_a$ and $X_b$ we have arranged that we satisfy the hypotheses of Lemma \ref{ping-pong} for the elements $a$ and $a^k$ where $k=v\cdot s$.

{\flushleft {\it \underline{Case}: $a$ is not torsion.}}

Throughout this case, given a set $X\subset S^1$, and $\epsilon>0$, we shall use the notation $N_\epsilon(X)$ to denote the open $\epsilon$-neighbourhood of $X$, that is, all points in $S^1$ a distance less than $\epsilon$ from some point in $X$.

In this case with $a$ not torsion, we must specify the set $F_a:= Fix(a^q)$, which is a closed non-empty subset of the circle which is disjoint from $Fix(b_1)$.  Choose a specific $\epsilon>0$ so that  $N_\epsilon(Fix(b_1))\cap N_\epsilon(F_a)= \emptyset$, noting that such an epsilon value exists as $Fix(b_1)$ and $F_a$ are disjoint compact subsets of $S^1$.

Let $m$ be a positive integer so that both $a^{mq}\cdot Fix(b_1)\subset N_\epsilon(F_a)$ and $a^{-mq}\cdot Fix(b_1)\subset N_\epsilon(F_a)$.  This $m$ exists as $a^{q}$ acts as a monotone strictly increasing, or as a monotone strictly decreasing function over each component of its support, and as the limit point of any point in a component of support of $a^q$ under increasing powers of $a^q$ must be a fixed point of $a^q$ (and similarly under a negative powers of $a^q$), and as $Fix(b_1)$ is a compact set and hence is contained in a union of finitely many components of support of $a^q$.

We now observe that for $n$ an integer with $|n|>m$, we have that $a^{nq}\cdot Fix(b_1)\subset F_a$ as well.  We would like to argue a stronger result now that there is a positive constant $N$ so that for all $j>N$ we have $a^{j}\cdot Fix(b_1)\subset N_\epsilon(F_a)$ and $a^{-j}\cdot Fix(b_1)\subset N_\epsilon(F_a)$.

To make this argument, the main point to observe is that there is an induced action of $\langle a\rangle$ on the set of components of support of $a^q$ which partitions these components into (possibly infinitely many) orbits of size $q$.  Further, as $a$ commutes with $a^q$ and $a$ is orientation preserving, it is easy to see that each such orbit consists of components of support where the action of $a^q$ is increasing on all components of the orbit, or decreasing on all components of the orbit.

It is also the case that there are only finitely many components of support of $a^q$ which are not already wholly contained in $N_\epsilon(F_a)$.  Let $C_1$, $C_2$, $\ldots$, $C_w$ represent these components, and observe that $Fix(b_1)$ is contained in the union $K$ of these compact intervals. For each component $C_j$, let $I_j$ be the closed interval $C_j\backslash N_\epsilon(F_a)$.  Now each of these components $C_j$ are in an orbit of length $q$ amongst the components of support of $a^q$, and in each such orbit the action of $a^q$ on each component is in the same direction.  Hence there is a finite number $N$ so that for all $j>N$ and intervals $I_m$, we have that $a^j\cdot I_m\subset N_\epsilon(F_a)$, and also $a^{-j}\cdot I_m\subset N_\epsilon(F_a)$.

Now define $J$ as below:
\[
J:=\cup_{i\in\Delta_N} ((a^i\cdot Fix(b_1))\cap K).
\]
Where we recall that $\Delta_n=[-n,n]\cap(\Z\backslash\{0\})$ for any particular $n\in\N$.

It is immediate that $J$ is a compact set which is disjoint from $Fix(b_1)$.  As such, there is a $\delta>0$ so that $\delta<\epsilon$ and the $\delta$-neighbourhood $N_\delta(Fix(b_1))$ of $Fix(b_1)$ is disjoint from the set $V_\delta$ defined as
\[
V_\delta:=\cup_{i\in\Delta_N} (a^i\cdot N_\delta(Fix(b_1)))
\]
 and noting that as $\delta<\epsilon$ we also have that $N_\delta(Fix(b_1))$ is disjoint from $N_\epsilon(F_a)$.

Now set $X_b:=N_\delta(Fix(b_1))$ and $X_a:=N_\epsilon(F_a)\cup V_\delta$.

By construction, there is an integer $z>0$ so that $b_1^z$ takes the complement of $X_b$ (and so, $X_a$) into $X_b$, while all non-trivial powers of $a$ take $X_b$ into $X_a$.  Hence the integer $k=s\cdot z$ has the property that  $a$ and $b^k$ freely generate a free group of rank $2$.
\end{proof}

\section{Applying Condition \ref{weak-kesten}, and variants, in $T$}\label{trying-conditions}
Here we list lemmas, where the Condition \ref{weak-kesten} can be used.

\begin{lemma}\label{fixed-point}
Let $H$ be a finite set of nontrivial elements in $T$ so that there is some point $p\in \cap_{h\in H} Supp(h).$   Then, for any positive integer $n$ there is an element $g\in T$  and $\{c_1,c_2,\ldots,c_n\}$ so that $c_i\in C_T(g)$ for all $i$, and so that for all $h\in H$ we have the set
\[
G_h:=\left\{ (gh)^{c_i}\mid i\in \left\{1,2,\ldots, n\right\}\right\}
\]
is a free basis for a free group of rank $n$.
\end{lemma}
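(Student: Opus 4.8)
The plan is to reduce the rank-$n$ statement to a single rank-$2$ freeness assertion and then to extract the latter from Lemma \ref{free-powering-one}. First I would record the elementary fact that if $\langle c,w\rangle$ is free of rank $2$ with free basis $\{c,w\}$, then for every $n$ the conjugates $\{c^{i}wc^{-i}:0\le i\le n-1\}$ form a free basis of a free subgroup of rank $n$; indeed these are part of the standard free basis $\{c^{i}wc^{-i}:i\in\Z\}$ of the kernel of the homomorphism $\langle c,w\rangle\to\Z$ sending $c\mapsto 1$ and $w\mapsto 0$. Hence it suffices to produce a single $g\in T$ together with a single non-torsion $c\in C_T(g)$ so that $\langle c,gh\rangle$ is free of rank $2$ for every $h\in H$: then $c_i:=c^{\,i-1}$ satisfies $c_i\in C_T(g)$ and makes $G_h=\{(gh)^{c_i}\}$ a free basis of rank $n$ for all $h$ at once.

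To obtain freeness of $\langle c,gh\rangle$ I would invoke Lemma \ref{free-powering-one} with $a:=gh$ and $b:=c$, both of which have rational rotation number automatically since they lie in $T$. The lemma yields freeness not of $\langle gh,c\rangle$ but of $\langle gh,c^{k}\rangle$ for some power $k=k_h$; as $H$ is finite I would pass to a common power $k:=\prod_{h\in H}k_h$ and use that $c^{k}$ is a power of each $c^{k_h}$, so that $\langle gh,c^{k}\rangle$ stays free of rank $2$ for every $h$ (in a free group $\langle x,y\rangle$ the pair $\{x,y^{m}\}$ is again a free basis). Renaming $c^{k}$ as $c$ --- still non-torsion and still in $C_T(g)$ --- reduces everything to verifying the two hypotheses of Lemma \ref{free-powering-one}: that $c$ is not torsion, and that for each $h\in H$ and each $j$ with $(gh)^{j}\neq 1$ one has $((gh)^{j}\cdot Fix(c^{s}))\cap Fix(c^{s})=\emptyset$, where $s$ is the denominator of $Rot(c)$.

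The construction of $g$ and $c\in C_T(g)$ realizing this disjointness for \emph{all} $h$ simultaneously is where the hypothesis $p\in\bigcap_{h\in H}Supp(h)$ is used, and is the step I expect to be the main obstacle. Since $gh$ has rational rotation number, with denominator $q_h$ say, a point is moved off itself by every nontrivial power of $gh$ exactly when it avoids $Fix((gh)^{q_h})$ (the torsion case being automatic), so the task is to place the small set $Fix(c^{s})$ in the complement of $\bigcup_{h\in H}Fix((gh)^{q_h})$. The idea is to choose $g$ so as to govern the dynamics of each $gh$ near $p$: because $h\cdot p\neq p$ for every $h$, a suitable $g$ forces each $gh$ to genuinely displace a neighborhood of $p$, which is what should guarantee that this complement is nonempty and leave room for $Fix(c^{s})$. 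The truly delicate point is that $c$ must lie in $C_T(g)$, so its fixed set cannot be prescribed freely; my plan would be to take $g$ with a large, explicitly understood centralizer (for example assembled from several copies of one pattern, so that $C_T(g)$ contains non-torsion elements with small, controllable fixed sets) and then to choose $c\in C_T(g)$ whose fixed set lands in the gap produced above. Showing that such a $g$ exists for every finite $H$ sharing a support point $p$, and that the resulting $c$ meets the disjointness condition for all $h$ and all powers, is the technical core on which the proof rests.
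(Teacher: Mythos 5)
Your overall architecture is the same as the paper's: reduce the rank-$n$ claim to a single rank-$2$ freeness statement $\langle c, gh\rangle$ with $c\in C_T(g)$, take the $c_i$ to be powers of $c$, obtain the rank-$2$ freeness from Lemma \ref{free-powering-one}, and pass to a common power over the finite set $H$. All of that is correct. The gap is that the step you yourself flag as ``the technical core'' --- actually constructing $g$ and $c$ so that the hypotheses of Lemma \ref{free-powering-one} hold for every $h\in H$ simultaneously --- is left as a plan, and the plan points in a harder direction than necessary: you propose a $g$ with a large centralizer and then a separate $c\in C_T(g)$ whose fixed set must be steered into a gap you have not shown to be nonempty. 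Without that construction the lemma is not proved, and it is exactly here that the hypothesis $p\in\bigcap_{h\in H}Supp(h)$ has to do its work.

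The paper's resolution is simpler and worth internalizing: take $c$ to be a power of $g$ itself (so $c\in C_T(g)$ is automatic) and choose $g$ with rotation number $0$ fixing \emph{exactly} the point $p$, with graph close to a step function (the steep part steeper than every slope occurring in $H$, the flat part flatter). Then in the application of Lemma \ref{free-powering-one} one has $b=g$, $s=1$, and $Fix(b^s)=\{p\}$ is a single point, so hypothesis (2) collapses to the assertion that $p$ is not on a finite periodic orbit of $gh$; this follows because $p\in Supp(h)$ gives $gh\cdot p=g(h\cdot p)\neq p$ and the near-step-function shape forces $gh$ to have exactly two fixed points, with $p$ on an infinite orbit limiting to the ends of its component of support. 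Note also a small imprecision in your reduction: placing $Fix(c^s)$ off $\bigcup_h Fix((gh)^{q_h})$ only guarantees that each point of $Fix(c^s)$ is aperiodic under $gh$, whereas Lemma \ref{free-powering-one} needs the stronger set-level condition $(gh)^j\cdot Fix(c^s)\cap Fix(c^s)=\emptyset$ (no point of the fixed set may land on \emph{another} point of it). That distinction disappears precisely when $Fix(c^s)$ is a singleton, which is a second reason the choice $c=g^{\theta}$ with $Fix(g)=\{p\}$ is the right one.
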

\begin{proof}
Let $H$ and $p$ as in the statement of the lemma, and let $n\in \N$ be given.  For each $h\in H$, let $Rot(h):=r_h/s_h$ written in lowest terms (NB, any finite periodic orbit under the action of $\langle h\rangle$ is of length $s_h$).  By the definition of $p$, we see there is a non-empty interval $(a,b)$ with $p\in (a,b)$ so that  for all $h\in H$ we have $(a,b)\cdot h^j\cap (a,b)=\emptyset$ for all $1\leq j<s_h$.

Now let $g\in T$ be an element with rotation number $Rot(g)=0$ which fixes exactly the point $p$.  We can choose $g$ so that for all $h\in H$ the product $gh$ has precisely two fixed points which are generated from the contact of the graph of $g$ with the graph of the element $h^{-1}$, by choosing the graph of $g$ to be near to a step function (the nearly vertical component of the graph of $g$ should be steeper than any slope of any element of $H$, and the nearly horizontal component of the graph of $g$ should have slope closer to zero than any slope of any element of $H$, and, in a small epsilon box around $(p,p)$ which misses the graphs of the elements of $H$, the graph of $g$ is unrestrained).  As $g$ fixes the point $p$, it is the case that that for all elements $h\in H$, we have $k_h\cdot p=gh\cdot p=g\cdot(h\cdot p) \neq p$ and further that under the action of $\langle k_h\rangle$, $p$ is in an infinite orbit which limits to the two ends of the component of support of $k_h$ which contains $p$.
\newcommand{\lcm}{\mathop{LCM}}
In these conditions we can immediately apply Lemma \ref{free-powering-one} to claim that for each $h\in H$, there is a power $g^{\rho_h}$ of $g$ so that $k_h$ and the element $g^{\rho_h}$ freely generate a free group of rank $2$.  Now take $\theta:=\lcm\{\rho_h\mid h\in H\}$, so that $g^\theta$ is free for $k_h$ for each $h$.

Now, it is immediate that setting $c_i:= g^{i\cdot \theta}$ for indices $i$ in the range $1\leq i\leq n$,  we can create sets $X_h:=\{(gh)^{c_i}\}_{i=1}^n$ so that $\langle X_h\rangle$ is free on $n$-generators for all $h\in H$.

\end{proof}
The following is some weakening of Condition \ref{weak-kesten} which we can achieve in Thompson's group $T$.  Unfortunately, by raising to the power $p$, we find a free subgroup of $K:=\left\{ ((hg)^{c_i})\mid i\in \left\{1,2,\ldots, n\right\}\right\}$ which is generally not finite index.

\begin{lemma}\label{powerfree}  Let $H$ be a finite set of nontrivial elements in $T$ with cardinality $p$.  Then there is an element $g\in T$ such that for any positive integer $n$ there are elements  $\{c_1,c_2,\ldots,c_n\}$ so that $c_i\in C_T(g)$ for all $i$, and so that for all $h\in H$  the set
\[
G_h:=\left\{ ((hg)^{c_i})^p\mid i\in \left\{1,2,\ldots, n\right\}\right\}
\]
freely generates a free group of rank $n$.
\end{lemma}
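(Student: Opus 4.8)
The plan is to reduce the statement, as in Lemma~\ref{fixed-point}, to the construction of a single non-torsion element $g\in T$ together with a single exponent $\theta$ for which $(hg)^p$ and $g^{\theta}$ freely generate a free group of rank two for \emph{every} $h\in H$. Granting this, set $c_i:=g^{i\theta}$, so that each $c_i\in C_T(g)$ and
\[
\bigl((hg)^{c_i}\bigr)^p=\bigl((hg)^p\bigr)^{c_i}=g^{i\theta}(hg)^pg^{-i\theta}.
\]
Since $(hg)^p$ and $g^{\theta}$ freely generate $\langle (hg)^p,g^{\theta}\rangle$, the normal closure of $(hg)^p$ in this rank-two free group is free on the conjugates $\{g^{i\theta}(hg)^pg^{-i\theta}:i\in\Z\}$, so any $n$ of them form a free basis of a free group of rank $n$; this is exactly the assertion for $G_h$. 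To handle all of $H$ at once I would first produce, for each $h$, an exponent $\theta_h$ by the argument below, and then take $\theta:=\operatorname{lcm}\{\theta_h:h\in H\}$; replacing $g^{\theta_h}$ by its power $g^{\theta}$ preserves free generation with $(hg)^p$, since $\langle x,y^m\rangle$ is free of rank two whenever $\langle x,y\rangle$ is.

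The dynamical engine is Lemma~\ref{free-powering-one}, applied with $b:=g$ in the role of the element raised to a power and $a:=(hg)^p$. First I would choose $g$ so that $g$ is not torsion and so that $hg$ is not torsion for each $h\in H$ (the $g$ producing a torsion $hg$ are exceptional, so this is a generic constraint); this secures hypothesis~(1) of Lemma~\ref{free-powering-one} and ensures $(hg)^p\neq 1_T$. Writing $s$ for the denominator of $Rot(g)$, hypothesis~(2) then demands that no point of the finite set $Fix(g^s)$ be carried back into $Fix(g^s)$ by a nontrivial power of $(hg)^p$; equivalently, with $hg$ non-torsion, that the nonzero times $t$ at which $(hg)^t$ sends a point of $Fix(g^s)$ onto another such point, collected into a set $R_h\subset\Z$, satisfy $R_h\cap p\Z=\{0\}$. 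Lemma~\ref{free-powering-one} then yields the exponent $\theta_h=k$ with $(hg)^p$ and $g^{k}$ free, completing the reduction of the previous paragraph.

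The main obstacle, and the reason the $p$-th power is forced on us, is the construction of a single $g$ meeting hypothesis~(2) for all $h$ simultaneously when $H$ has \emph{no} common support point. One cannot take $Rot(g)=0$ here: if $g$ fixed $Fix(g)=Fix(g^s)$ pointwise, then any $h$ with $Fix(g)\cap Fix(h)\neq\emptyset$ would fix a point $x\in Fix(g)$ under $hg$ as well, whence $(hg)^{pj}x=x\in Fix(g)$ for all $j$ and (2) fails; avoiding this for every $h$ forces $Fix(g)\subseteq\bigcap_{h\in H}Supp(h)$, which is precisely the hypothesis of Lemma~\ref{fixed-point} that we are dropping. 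I would therefore take $g$ with a suitable \emph{nonzero} rotation number, shaped (as in Lemma~\ref{fixed-point}) like a steep step across a fundamental domain, so that $Fix(g^s)$ is a single periodic $g$-orbit of size $s$ whose points lie in infinite $hg$-orbits; then $R_h$ is finite. The key point is that passing from $hg$ to $(hg)^p$ replaces $\langle hg\rangle$ by its subgroup $\langle (hg)^p\rangle$, so the unattainable demand $R_h=\{0\}$ is weakened to $R_h\cap p\Z=\{0\}$, and the remaining freedom in positioning the orbit $Fix(g^s)$ relative to the $hg$-dynamics is what lets us meet it. Verifying that a single $g$ can be chosen to achieve this, and keep $hg$ non-torsion, for all $p=|H|$ members of $H$ at once is the delicate heart of the proof; it also explains why freeness is obtained only after raising to the $p$-th power, with the resulting free group sitting at infinite index inside $\langle (hg)^{c_1},\dots,(hg)^{c_n}\rangle$, as noted in the remark preceding the statement.
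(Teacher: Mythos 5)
Your overall skeleton matches the paper's: you reduce the lemma to producing a single $g$ and a single exponent $\theta$ with $\langle (hg)^p, g^{\theta}\rangle$ free of rank two for every $h\in H$ (via Lemma \ref{free-powering-one} applied to $a=(hg)^p$, $b=g$), and then pass to the conjugates $g^{i\theta}(hg)^pg^{-i\theta}$, which form a free basis because the normal closure of one basis element of a rank-two free group is freely generated by its conjugates under powers of the other. That reduction, the least-common-multiple step, and your diagnosis that $Rot(g)$ must be nonzero (since $Rot(g)=0$ would force $Fix(g)\subseteq\bigcap_{h\in H} Supp(h)$, i.e.\ the hypothesis of Lemma \ref{fixed-point} that we are trying to drop) are all correct and agree with the paper.

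However, you stop exactly where the work begins: you yourself flag that ``verifying that a single $g$ can be chosen to achieve this \ldots\ is the delicate heart of the proof,'' and securing hypothesis (2) of Lemma \ref{free-powering-one} for all $h$ simultaneously \emph{is} the content of this lemma, not a detail to be deferred. The paper's construction is concrete and explains where the exponent $p=|H|$ actually comes from: choose one point $x_i\in Supp(h_i)$ for each $i$, build $g$ with rotation number $1/p$ whose unique finite orbit is $P=\{x_1<\cdots<x_p\}$, cyclically permuted in order, and shape $g$ so close to a step map that every nonzero power of $g$ pushes the complement of small neighbourhoods of $P$ into those neighbourhoods minus $P$. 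Then iterating $k_i=gh_i$ on any $x_j$ walks along the cycle $x_j\mapsto x_{j+1}\mapsto\cdots$ until the orbit first meets $Supp(h_i)$ --- which must happen within $p$ steps because $x_i\in Supp(h_i)$ --- after which the point leaves $P$ and, by the step-map property, never returns. Hence every nonzero return time to $P=Fix(g^p)$ has absolute value less than $p$, so $(k_i^{tp}\cdot P)\cap P=\emptyset$ for all $t\in\Z\backslash\{0\}$, which is exactly hypothesis (2) for $a=k_i^p$. Your stated target (``$R_h\cap p\Z=\{0\}$,'' which for a set of \emph{nonzero} return times should read $R_h\cap p\Z=\emptyset$) is the right condition, but you give no mechanism that forces it for all $h$ at once; the cyclic-orbit-of-length-$|H|$ device above is that mechanism, and it is why the denominator of $Rot(g)$ is taken to be $p$ itself rather than some unspecified $s$. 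As written, your argument has a genuine gap at this point.
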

\begin{proof}
Index the $h\in H$ so that $H=\left\{h_1,h_2,\ldots,h_p\right\}$ for some minimal positive integer $p$.  As each element $t\in T$ has a rational rotation number $Rot(t)\in \Q/\Z$, let  $Rot(h_i)=r_i/s_i$ expressed in lowest positive terms.  For each index $i$, inductively choose a point $x_i\in Supp(h_i)$ so that the set $$X_i:=\left\{ h_j^{-1}\cdot x_i, x_i, h_j\cdot x_i\mid 1\leq j\leq i\right\}$$ is disjoint from the union $\cup _{j<i}X_j$, and where each $x_i$ is chosen as a dyadic rational so that no point in $X_i$ is an end of a component of support of $h_j^{s_j}$ for any index $j$.  Note that in all cases we are choosing an $x_i$ so that a finite set of images miss a finite set of points in the circle, and this is easy to do.  Set $P:=\left\{x_i\mid 1\leq i\leq p\right\}$.  Reindex $P$ so that $0\leq x_1<x_2<\cdots<x_p<1$, and reindex the sets $X_i$ correspondingly.

Now choose, for each $i$, an interval $(a_i,b_i)$ centred around $x_i$, and set $$I_i:=\left\{h_j^{-1}\cdot(a_i,b_i), (a_i,b_i), h_j\cdot (a_i,b_i)\mid 1\leq j\leq i\right\}$$ where each $(a_i,b_i)$ is chosen small enough so that each element of $I_i$ is disjoint from the union $\cup_{j<i, B\in I_j}B$, and where $(a_i,b_i)$ is disjoint from $h_j^{-1}\cdot(a_i,b_i)$  and $h_j\cdot(a_i,b_i)$ whenever $x_i\in Supp(h_j)$, and where $h_j^{-1}\cdot(a_i,b_i)$  and $h_j\cdot(a_i,b_i)$ intersect each other non-trivially only if $h_j$ is torsion of order two.  We can insist these intervals are possibly smaller still, so that the complement of the union $\tilde{U}:=\cup_{1\leq j\leq p, B\in I_j}B$  is a union of closed intervals each of which has non-empty interior.  We observe that by our choices of the sets $X_i$, we can so choose our intervals $(a_i,b_i)$.

Now let $\tilde{g}$ be a non-torsion element with rotation number $1/p$ which admits exactly one orbit of size $p$ under the action of $\langle\tilde{g}\rangle$, where the orbit is the set $P$, and where $\tilde{g}$ cyclically permutes the $x_i$ in order, taking $x_1$ to $x_2$ and etc. 

The element $\gamma:=\tilde{g}^p$ has $p$ components of support $S_1:=(x_1,x_2)$, $S_2:=(x_2,x_3)$, $\dots$, $S_p:=(x_p,x_1)$, and $\gamma$ is either increasing on each component of support, or decreasing on each component of support.  We re-choose $\tilde{g}$ if necessary so that it satisfies all previous conditions and so that $\gamma$ is increasing on each component of support, and note that such elements $\tilde{g}$ exist.

Now set $U:=\cup_{1\leq j\leq p} (a_i,b_i)$, which union also has the property that the complement $C:= S^1\backslash U$ is a union of $p$ disjoint closed intervals $D_i:=[b_i,a_{i+1}]$ (where we set $a_{p+1}:= a_1$) each of which is not a point.  As such is the case, we can find a positive integer $d$ so that $\gamma^d\cdot C \subset U$ as after $d$ iterations of $\gamma$, the set $D_i$ will be moved so as to have image just to the left of $x_{i+1}$ (where again, we set $x_{p+1}:= x_1$).

Now set $g:= \gamma^d\tilde{g}$.  The element $g$ so constructed has rotation number $1/p$ as before, and acts on the set $P$ as $\tilde{g}$ does, but it has the further property that for all integers $s\neq 0$, $g^s\cdot C\subset U.$

Now set, for each index $i$, $k_i:=g h_i$.  We will now show that for all $k_i$, the point $x_j$ is not in any finite periodic orbit of $k_i$, for any index $j$.  

Let $i$ and $j$ be indices so that $k_i$ and $x_j$ are an element and point as defined above, respectively.  Let $q$ be an index and consider the interval $(a_q,x_q]$ under the action of $g$, and then consider the interval $(a_j,x_j]$ under the action of $k_i$.

Under the action of $g$, we know already that $x_q\mapsto x_{q+1}$, and that the component $[b_{q-1},a_q]$ of $C$ maps into the interval $(a_{q+1},x_{q+1})$.  In particular, the whole interval $[b_{q-1},x_q]$ is mapped into $(a_{q+1},x_{q+1}]$ by $g$ and so the interval $[a_q,x_q]$ is mapped into $(a_{q+1},x_{q+1}]$ as well, and precisely the point $x_q$ will map to $x_{q+1}$.

Now consider the action of $k_i$ on $(a_j,x_j]$.  If $x_j$ is in the support of $h_i$, then $h_i\cdot(a_j,x_j]\subset C$, whereupon that image is moved into some interval $(a_*,x_*)$ by the action of $g$.

In particular, $k_i^z\cdot x_j = x_{j+z}$ (in cyclic order) as long as for all $0\leq y\leq z$ we have $x_{j+y}\not\in Supp(h_i)$.  If instead for some minimal integer $s\in\left\{0,1,\ldots,p-1\right\}$ we have $x_{j+s}\in Supp (h_i)$ then on $k_i^{s+1}$ will move $x_j$ out of $P$ as we will have $k_i^{s+1}\cdot x_j\in (a_{*},x_{*})$ for some index $*$.  Furthermore, if this happens, all further iterates of $x_j$ will fail to re-enter $P$.

However, $x_i$ is itself in the support of $h_i$, so this eventuality happens (in $p$ steps or fewer), and so no $x_j$ is on a finite periodic orbit under the action of $\langle k_i\rangle$, for any indices $j$ and $i$.

In particular, for all indices $j$, we see that $ (k_j^{tp}\cdot P)\cap P=\emptyset$ for all $t\in \Z\backslash\left\{0\right\}$. 

We can now quote Lemma \ref{free-powering-one}, using $a:= k_j^p$ for each index $j$ and $b=g$ to claim that for each index $j$ there is an integer $z_j$ so that the group $\langle k_i^p, g^{z_j}\rangle$ is free on two generators  (in this setup, $P=Fix(b^s)=Fix(g^p)$).  Now set $z:=LCM(\left\{z_1,z_2,\ldots,z_p\right\})$ the least common multiple of the values $z_j$, and then we have that for all indices $j$, the elements $k_j^p$ and $g^z$ generate a free group.  In particular, setting $c_i:= g^{iz}$ for $i\in \left\{1,2,\ldots, n\right\}$ we see that $\Gamma_j:=\left\{ (k_j^p)^{c_i}=(k_j^{c_i})^p\mid 1\leq i\leq n, i\in\Z\right\}$ freely generates a free group of rank $n$.

But now, recalling that $k_i=g h_i$, and that therefore $[g,c_i]=1$, we have proved our claim. 
\end{proof}

\section{Some questions}\label{conjectures}
The last result in particular shows that our Condition \ref{weak-kesten} is almost (in some sense) enough to show that $\cstar{T}$ is simple.  But not quite! This, together with our partial converse of the Haagerup-Olesen result,  encourages us to ask the foliowing question.
\begin{question}
Is the non-amenability of Thompson's group $F$ equivalent to the simplicity of the algebra $\cstar{T}$.
\end{question}

As the setup of Condition \ref{weak-kesten} is more flexible than that of Condition \ref{kesten-test}, we find Condition \ref{weak-kesten} easier to use.  Still, we have not actually proven that the conditions are not equivalent.  Thus it would be quite useful to give a positive answer to the following question.
\begin{question}
Is there a group $G$ which fails to satisfy Condition \ref{kesten-test} but which does satisfy Condition \ref{weak-kesten}?
\end{question}

In \cite{JM}, \cite{JS} and \cite{JNS} several approaches to amenability via group actions were developed. It is an interesting question to relate these approaches to properties of a group's $C^*$-algebra.

\bibliographystyle{amsplain}
\bibliography{ploiBib.bbl}
\end{document}